\begin{document}

\title{Extensions of Quasipolar Rings}

\author{Orhan G\"{u}rg\"{u}n}
\address{Orhan G\"{u}rg\"{u}n, Department of Mathematics, Ankara University, Turkey}
\email{orhangurgun@gmail.com}

\date{\empty}
\date{}
\newtheorem{thm}{Theorem}[section]
\newtheorem{lem}[thm]{Lemma}
\newtheorem{prop}[thm]{Proposition}
\newtheorem{cor}[thm]{Corollary}
\newtheorem{exs}[thm]{Examples}
\newtheorem{defn}[thm]{Definition}
\newtheorem{nota}{Notation}
\newtheorem{rem}[thm]{Remark}
\newtheorem{ex}[thm]{Example}

\maketitle

\begin{abstract}

In this paper we define and study quasipolar general rings (with
or without identity) and extend many of the basic results to the
wider class. We obtain some new characterizations of quasipolar
and strongly $\pi$-regular elements by using quasipolar general
rings. We see that quasipolar general rings lies between strongly
$\pi$-regular and strongly clean general rings. Consequently, we
prove that $R$ is pseudopolar if and only if $R$ is strongly
$\pi$-rad clean and quasipolar.
\\[+2mm]
{\bf Keywords:} Quasipolar general rings, strongly clean general
rings, strongly $\pi$-regular general rings, (generalized) Drazin
inverse, pseudopolar rings.
\thanks{ \\{\bf 2010 Mathematics Subject Classification:} 16S70, 16U99, 15A09}
\end{abstract}

\section{Introduction}

Throughout this article, by the term ring we mean an associative
ring with identity, and by a general ring we mean an associative
ring with or without identity. For clarity, $R$ and $S$ will
always denote rings, and general rings will be denoted $I$ or $A$.
The notation $U(R)$ denotes the group of units of $R$, $J(I)$
denotes the Jacobson radical of $I$ and $Nil(I)$ denotes the set
of all nilpotent elements of $I$. The \emph{commutant} and
\emph{double commutant} of an element $a$ in a ring $R$ are
defined by $comm_R(a)=\{x\in R~|~xa=ax\}$, $comm_R^2(a)=\{x\in
R~|~xy=yx~\mbox{for all}~y\in comm_R(a)\}$, respectively. If there
is no ambiguity, we simply use $comm(a)$ and $comm^2(a)$. If
$R^{qnil}=\{a\in R~|~1+ax\in U(R)~\mbox{for every}~x\in comm(a)\}$
and $a\in R^{qnil}$,  then $a$ is said to be \emph{quasinilpotent}
\cite{H}. Set $J^{\#}(R)=\{ x\in R~\mid~\exists ~n\in {\Bbb
N}~\mbox{such that}~x^n\in J(R)\}$. Clearly, $J(R)\subseteq
J^{\#}(R) \subseteq R^{qnil}$.

An element $a\in R$ is called \emph{quasipolar} provided that
there exists an idempotent $p\in comm^2(a)$ such that $a+p\in
U(R)$ and $ap\in R^{qnil}$. A ring $R$ is \emph{quasipolar} in
case every element in R is quasipolar. This concept ensued from
Banach algebra. Indeed, for a Banach algebra $R$ (see \cite[Page
251]{H0}),

$$a\in R^{qnil}\Leftrightarrow
\lim\limits_{n\to\infty}\parallel a^n\parallel^{\frac{1}{n}}=0.$$

\noindent Quasipolar rings are a very popular subject. Properties
of quasipolar rings were studied in \cite{D, H0, H, K, KP, YC}.

Ara \cite{Ara} defined and investigated the notion of an exchange
ring without identity. Then Chen and Chen \cite{CC} introduced the
concept of strongly $\pi$-regular general ring. In \cite{N3},
Nicholson and Zhou defined the notion of clean general ring. They
extend some of the basic results about clean rings to general
rings. In \cite{WC}, Wang and Chen defined the concept of strongly
clean general ring, and some properties about strongly clean rings
were extended. These works motivate us to define quasipolar
general rings. In this paper we see that every strongly
$\pi$-regular general ring is a quasipolar general ring and any
quasipolar general ring is a strongly clean general ring. We also
see that every (two-sided) ideal of an quasipolar ring is an
quasipolar general ring, but there exist quasipolar general rings
which are not ideals of quasipolar rings
(Example~\ref{important}). In particular, we prove that $a\in R$
is strongly $\pi$-regular if and only if there exist a strongly
regular $s\in R$ and $n\in Nil(R)$ such that $a=s+n$ and $sn=ns=0$
(Theorem~\ref{chen}), and $a\in R$ is quasipolar if and only if
there exist a strongly regular $s\in comm^2(a)$ and $q\in
R^{qnil}$ such that $a=s+q$ and $sq=qs=0$ (Corollary~\ref{cor3}).

An element $a$ of $R$ is \emph{(generalized) Drazin invertible}
(\cite{KP}) \cite{D, K} in case there is an element $b\in R$
satisfying $ab^2 = b$, $b\in comm^2(a)$ and ($a^2b-a\in R^{qnil}$)
$a^2b-a\in Nil(R)$. Such $b$, if it exists, is unique; it is
called a \emph{(generalized) Drazin inverse} of $a$. Koliha
\cite{K} showed that an element $a\in R$ is Drazin invertible if
and only if $a$ is strongly $\pi$-regular (\cite[Lemma 2.1]{K}).
Koliha and Patricio \cite{KP} proved that an element $a\in R$ is
generalized Drazin invertible if and only if $a$ is quasipolar
(\cite[Theorem 4.2]{KP}). With this in mind, we show that, for a
general ring $I$, $a\in I$ is quasipolar if and only if there is
an element $b\in I$ satisfying $ab^2 = b$, $b\in comm^2(a)$ and
$a^2b-a\in QN(I)$ (Theorem~\ref{teo0}), and $a\in I$ is strongly
$\pi$-regular if and only if there is an element $b\in I$
satisfying $ab^2 = b$, $b\in comm^2(a)$ and $a^2b-a\in Nil(I)$
(Theorem~\ref{teo4}).

Finally, we characterize a pseudopolar element of a ring, and we
give the relations among quasipolarity, strong $\pi$-rad cleanness
and pseudopolarity. It is shown that $R$ is pseudopolar if and
only if $R$ is strongly $\pi$-rad clean and quasipolar
(Theorem~\ref{orhan}).

\section{Quasipolar General Rings}

Let $I$ be a general ring with $p,q\in I$, we write $p\ast q=p + q
- pq$. Let $$Q(I)=\{q\in I~|~p\ast q=0=q\ast p~\mbox{for
some}~p\in I\}.$$

\noindent Note that $J(I)\subseteq Q(I)$. We define a set
$$QN(I)=\{q\in I~|~qx\in Q(I)~\mbox{for
every}~x\in comm(q)\}.$$

\noindent Clearly, $J(I)\subseteq Q(I)$ and $Nil(I)\subseteq QN(I)$.
If $R$ has an identity, then we have $Q(R)=\{q\in R~|~1-q\in U(R)\}$
and $QN(R)=R^{qnil}$. Further, if $a\in QN(I)$, then $a$ is also
said to be \emph{quasinilpotent}.

\begin{lem}\label{lem1} The following conditions are equivalent
for a ring $R$.

\begin{enumerate}
    \item $R$ is quasipolar.
    \item For all $a\in R$, $a+p\in Q(R)$ and $p^2=p\in
    comm^2(a)$ where $a-ap\in QN(R)$.
\end{enumerate}
\end{lem}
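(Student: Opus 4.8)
The plan is to show this is essentially an unwinding of definitions, translating the ring-theoretic language of the original definition of quasipolar into the general-ring language introduced just before the lemma. Recall that in a ring $R$ with identity we have $Q(R)=\{q\in R \mid 1-q\in U(R)\}$ and $QN(R)=R^{qnil}$. So the content of the lemma is the equivalence, for a fixed $a$, between the statement ``there is an idempotent $p\in comm^2(a)$ with $a+p\in U(R)$ and $ap\in R^{qnil}$'' and the statement ``there is an idempotent $p\in comm^2(a)$ with $a+p\in Q(R)$ and $a-ap\in QN(R)$.''

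First I would handle the two ``translated'' pieces separately. For the unit condition: I claim that for an idempotent $p$ commuting with $a$, one has $a+p\in U(R)$ if and only if $a+p\in Q(R)$, i.e. $1-(a+p)\in U(R)$. The key observation is that $p$ and $1-p$ are complementary orthogonal idempotents central to the relevant subring generated by $a$ and $p$, so $a+p$ is a unit precisely when its two ``corner'' components $ap$ (in $pRp$) and $(a+p)(1-p) = a(1-p) = a - ap$ (in $(1-p)R(1-p)$) are units in the respective corners. A short computation: $(a+p)$ is a unit iff $ap$ is a unit in $pRp$ (with identity $p$) and $a(1-p)$ is a unit in $(1-p)R(1-p)$; similarly $1-(a+p) = (1-a-p)$ decomposes as $-(a-p)$... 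I will instead argue directly that $a+p$ and $1-a-p$ are simultaneously units or not: note $(a+p)(1-a-p)$... rather, the cleanest route is that $p\in comm^2(a)$ forces $p\in comm(a)$, and then in the ring $R$ the element $a+p$ is a unit iff $ap + (1-p)$ is a unit and $a(1-p)+p$ is a unit (splitting along $p$), while the condition $a-ap\in QN(R)=R^{qnil}$ will take care of the $(1-p)$-corner automatically. So the real equivalence to verify is between ``$ap\in R^{qnil}$'' and ``$a - ap\in R^{qnil}$'' is \emph{not} what is happening — rather $ap$ lives in the $p$-corner and $a(1-p)$ in the $(1-p)$-corner, and the point is that $a - ap = a(1-p)$ is quasinilpotent in $R$ exactly when $a(1-p)$ is a unit... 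I would sort this out by working in the Peirce decomposition $R = pRp \oplus pR(1-p)\oplus (1-p)Rp \oplus (1-p)R(1-p)$, where because $p$ commutes with $a$ the off-diagonal $a$-components vanish; write $a = a_1 + a_2$ with $a_1 = ap\in pRp$, $a_2 = a(1-p)\in (1-p)R(1-p)$. Then $a + p = (a_1 + p) + a_2$, and this is a unit in $R$ iff $a_1 + p$ is a unit of $pRp$... no: $a_1 + p = (a+p)p$ has $p$ as its identity in $pRp$, so $a+p\in U(R)$ iff $a_1+p\in U(pRp)$ and $a_2\in U((1-p)R(1-p))$. Meanwhile $a_1 = ap\in R^{qnil}$ iff $a_1\in (pRp)^{qnil}$ (quasinilpotence is detected in the corner since $p$ is central in its corner and commutes with $a$), and by the standard Banach-algebra-style fact $a_1\in (pRp)^{qnil} \Rightarrow a_1 + p\in U(pRp)$ automatically; conversely $a_2\in U((1-p)R(1-p))$ is equivalent to $a_2 = a - ap\notin\,$(non-unit), which matches the pair ``$a-ap\in QN(R)$ fails to be the right condition'' — so I must double-check the \emph{exact} placement: in Lemma~\ref{lem1}(2) the quasinilpotent element is $a - ap = a(1-p)$, whereas in the definition it is $ap$. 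These are swapped because $p$ in the lemma plays the role of $1-p$ in the definition. Hence the clean strategy: \textbf{given} (1) with idempotent $p_0\in comm^2(a)$, $a+p_0\in U(R)$, $ap_0\in R^{qnil}$, set $p := 1 - p_0$; check $p\in comm^2(a)$ (complement of an element of $comm^2(a)$), and verify $a + p = a + 1 - p_0 \in Q(R)$ because $1 - (a+p) = p_0 - a = -(a - ap_0) + \dots$ hmm — compute $1-(a+p) = 1 - a - 1 + p_0 = p_0 - a$, and $a-ap = a - a(1-p_0) = ap_0\in R^{qnil}$, so $p_0 - a = p_0 - ap_0 - a p_0\cdot$... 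I will just verify $p_0 - a\in U(R)$ from $a+p_0\in U(R)$ and $ap_0\in R^{qnil}$ via the corner argument above. The reverse direction (2)$\Rightarrow$(1) is symmetric: given $p$ as in (2), put $p_0 := 1-p$.

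Concretely, the steps in order: (i) recall $Q(R)=\{q : 1-q\in U(R)\}$ and $QN(R)=R^{qnil}$ for unital $R$, so (2) becomes a purely ring-theoretic statement; (ii) observe that $p\in comm^2(a)$ iff $1-p\in comm^2(a)$, and $p^2=p$ iff $(1-p)^2 = 1-p$, so passing between $p$ and $1-p$ preserves the double-commutant and idempotency hypotheses; (iii) use the Peirce decomposition with respect to $p_0$ (or $p$) — which is legitimate because the idempotent commutes with $a$ — to reduce ``$a+p_0\in U(R)$'' to the conjunction of a unit condition in $p_0Rp_0$ and a unit condition in $(1-p_0)R(1-p_0)$, and similarly for ``$1-(a+p)\in U(R)$''; (iv) invoke the elementary fact that a quasinilpotent element $x$ in a corner ring $eRe$ (with identity $e$) satisfies $e - x\in U(eRe)$ and $e + x\in U(eRe)$, together with the converse bookkeeping, to match the unit conditions on the two sides; (v) conclude by setting $p_0 = 1-p$ for the nontrivial direction and reversing. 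The main obstacle I expect is step (iii)–(iv): making precise the bookkeeping of \emph{which} corner each quasinilpotence/unit condition lives in, and in particular checking the equivalence $ap\in R^{qnil} \iff ap\in (pRp)^{qnil}$ (so that quasinilpotence is genuinely a ``local'' property of the corner), which requires relating $comm_R(ap)$ to $comm_{pRp}(ap)$ and using that $p\in comm(a)$. Everything else is routine substitution of definitions.
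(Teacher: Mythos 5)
Your argument, once the false starts are stripped out, is correct but takes a genuinely different and heavier route than the paper's. You fix $a$, trade the spectral idempotent $p_0$ for $p=1-p_0$, and then must reconcile the unit conditions $a+p_0\in U(R)$ and $1-(a+p)=p_0-a\in U(R)$; this is exactly your Peirce-decomposition step, and it does work: since $p_0\in comm^2(a)\subseteq comm(a)$, the element $a+p_0$ decomposes diagonally as $(p_0+ap_0)+a(1-p_0)$, the $p_0$-corner component is automatically invertible there because $1\pm ap_0\in U(R)$ (take $x=\pm 1$ in the definition of $R^{qnil}$), and hence $a+p_0\in U(R)\Leftrightarrow a(1-p_0)\in U\big((1-p_0)R(1-p_0)\big)\Leftrightarrow p_0-a\in U(R)$. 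This in fact proves the stronger element-wise equivalence between condition (2) and the original definition of a quasipolar element. The paper sidesteps the sign issue entirely: both (1) and (2) quantify over all elements of $R$, so it applies the quasipolarity hypothesis to $-a$ rather than $a$, names the resulting idempotent $1-p$, and then $-a+(1-p)=u\in U(R)$ immediately gives $1-(a+p)=u\in U(R)$, i.e. $a+p\in Q(R)$ on the nose, while $(-a)(1-p)\in R^{qnil}$ is precisely $a-ap\in QN(R)$; the converse is the same substitution run backwards. So the paper's proof is a two-line change of variables needing no corner analysis, whereas yours buys the element-wise statement at the cost of the Peirce bookkeeping; if you keep your route, observe that you only need $1\pm ap_0\in U(R)$, not the full claim that quasinilpotence localizes to $p_0Rp_0$ (which the paper only establishes later, in Lemma~\ref{lem2}).
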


\begin{proof} $(1)\Rightarrow(2)$ Let $a\in R$. Since $R$ is
quasipolar, there exists an idempotent $1-p\in R$ such that
$1-p\in comm^2(a)$, $-a+1-p=u\in U(R)$ and $a(1-p)=a-ap\in
R^{qnil}$. Then $a+p=q$, $p\in comm^2(a)$ where $q=1-u$ and $q\ast
r=0=r\ast q$ with $r=1-u^{-1}$. As $R^{qnil}=QN(R)$, $a-ap\in
QN(R)$.

$(2)\Rightarrow(1)$ If $-a+p=q$ where $p^2=p\in comm^2(a)$, $q\in
Q(R)$ and $a-ap\in QN(R)$, then  $a+1-p=1-q$ where $(1-p)^2=1-p\in
comm^2(a)$, $1-q\in U(R)$ and $a(1-p)\in R^{qnil}$.
\end{proof}

\begin{defn}\label{defn1}\rm{ An element $a$ in a general ring $I$ is called a
\emph{quasipolar element} if $a+p\in Q(I)$ and $p^2=p\in
comm^2(a)$ where $a-ap\in QN(I)$; and $I$ is called a
\emph{quasipolar general ring} if every element is quasipolar.}
\end{defn}

\begin{rem}\label{hoca}\rm{ If $I$ is isomorphic to a general ring $K$ by $f$,
then $a\in I$ is quasipolar if and only if $f(a)$ is quasipolar in
$K$.}
\end{rem}

\begin{ex}\rm{ Idempotents, nilpotents, quasinilpotents and quasiregular elements are
all quasipolar.}
\end{ex}

Recall that an element $a$ in a general ring $I$ is called a
\emph{strongly clean element} if it is the sum of an idempotent
and an element of $Q(I)$ which commute; and $I$ is called a
\emph{strongly clean general ring} if every element is strongly
clean (\cite{WC}). Hence, by  Definition~\ref{defn1}, quasipolar
elements (general rings) are strongly clean element (general
ring).

We begin with a very useful lemma.

\begin{lem}\label{commute} Let $I$ be a general ring. If $a\in Q(I)$ and $a\in
comm(b)$, then $c\in comm(b)$ where $a\ast c=0=c\ast a$.
\end{lem}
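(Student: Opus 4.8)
The plan is to exploit the definition of $Q(I)$ together with the fact that the element $c$ we need to produce is essentially forced on us. Since $a \in Q(I)$, there is some $p \in I$ with $p \ast a = 0 = a \ast p$, i.e. $p + a - pa = 0$ and $a + p - ap = 0$; this $p$ plays the role of a "quasi-inverse" of $a$ under the $\ast$-operation. The element $c$ named in the statement (with $a \ast c = 0 = c \ast a$) must then coincide with $p$, because $\ast$-inverses are unique whenever they exist: if $a \ast c = 0 = c \ast a$ and $a \ast p = 0 = p \ast a$, then the usual monoid-inverse argument for the operation $\ast$ (which is associative, with two-sided identity $0$) gives $c = c \ast 0 = c \ast (a \ast p) = (c \ast a) \ast p = 0 \ast p = p$. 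So the content of the lemma is exactly: if $a$ commutes with $b$ and $p$ is the $\ast$-inverse of $a$, then $p$ commutes with $b$.

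First I would record the two identities $p + a = pa$ and $p + a = ap$ coming from $p \ast a = 0 = a \ast p$; subtracting gives $pa = ap$, and adding $a$'s commutation with $b$ we get that $b$ commutes with $pa$. Next I would multiply the identity $p + a - pa = 0$ on the left by $b$ and on the right by $b$ and compare. From $bp + ba - bpa = 0$ and $pb + ab - pab = 0$, using $ab = ba$ and $pa = ap$, I would isolate $bp - pb$ and show it is killed by the appropriate combination. The clean way: set $d = bp - pb$. Left-multiplying $p + a = ap$ by $b$ and right-multiplying by $b$ and using $ba = ab$ yields $bp - pb = b(ap) - (ap)b$... wait, more directly, multiply $p\ast a=0$ by $b$: from $bp+ba-bpa=0$ and $pb+ab-pab=0$ we get $bp-pb = bpa - pab = (bp)a - p(ab) = (bp)a - p(ba)$. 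Then I would substitute $a = ap + a - ... $ — rather, use $pa = p+a = ap$ to replace: $bp - pb = (bp)a - (pb)a = (bp - pb)a = da$, and symmetrically $d = a d$ by starting from $a \ast p = 0$. So $d = da = ad$, whence $d = d a = (d a) a \cdot$... iterating with $p$: since $d = da$ and $pa = p + a$, one computes $d = da = d(pa - p) \cdot$, hmm; the cleanest finish is $d = da \Rightarrow d = da = d a p \cdot$... Actually from $d = ad$ we get $d = ad = a(ad) = a^2 d = \cdots$; better, from $d = ad$ and $p+a=pa$: $d = ad \Rightarrow pd = pad = (p+a)d \cdot$ no. Let me instead finish via: $d = da$ and $d = ad$ give $d = ad = a(da) \cdot$... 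The safe route is to use $p$ once more: $d = da$, multiply by $p$ on the left: $pd = pda = (pd)a$; but also $p \ast a = 0$ lets us write $a = pa - p$, so $d = da = d(pa - p) = (dp)a - dp \cdot$, hmm circular.

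I expect the one genuine obstacle to be pushing the identity $d = ad = da$ to the conclusion $d = 0$ using only the $\ast$-relation (not invertibility, since $I$ need not have an identity). The trick I would use is: from $a \ast p = 0$ we have $a + p = ap$, so $ad + pd = apd$; since $ad = d$ this gives $d + pd = apd = p(ad) = pd$, hence $d = 0$. That is the punchline, and it only uses $ad = d$ plus one of the two $\ast$-identities — no units required. Then $bp = pb$, and taking $c = p$ completes the proof: $c \in comm(b)$ and, as noted, $a \ast c = 0 = c \ast a$. I would present the uniqueness remark briefly (or omit it) and give the computation $d := bp - pb$, $d = ad$, then $d = 0$, as the three displayed steps.
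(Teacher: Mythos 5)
Your argument is correct and, despite the exploratory detours in the write-up, lands on essentially the paper's own proof: both start from $a+c=ac=ca$, multiply the quasi-inverse identity by $b$ on the left and on the right to obtain $bc-cb=(bc-cb)a$ (your $d=da$; the paper's equation (2.1) rearranged), and then invoke the identity $a+c=ac$ a second time to force the commutator to vanish. The only differences are cosmetic --- the paper finishes by right-multiplying (2.1) by $c$ to get $bca=cba$, whereas you finish with $ad+pd=apd=pad=pd$, hence $d=0$, which is a slightly cleaner ending; and your uniqueness digression is unnecessary, since one can work with the given $c$ directly.
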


\begin{proof} Let $a\ast c=0=c\ast a$ and $ba=ab$. Then $a+c=ac=ca$. This implies that
$ba+bc-bca=0=ab+cb-cab$, and so

\[
\begin{aligned}
bc-bca=cb-cab.\\
 \end{aligned}
\text{\quad ~~~~~~~~~~~~~~~~~~~~~~~~~~~~~~~\quad}
\begin{gathered}
(2.1)\\
\end{gathered}\]

\noindent Multiplying $(2.1)$ by $c$ from the right, yields
$$bcc-bcac=cbc-cabc.$$

\noindent This gives $bca=cba=cab$ because $c-ac=-a$. This shows
that $bc=cb$. That is; $c\in comm(b)$, as desired.
\end{proof}

\begin{lem}\label{qregular} Let $I$ be a general ring. If $a\ast b=0$ and $c\ast a=0$,
then $b=c$.
\end{lem}

\begin{proof} Suppose that $a\ast b=0$ and $c\ast a$ for $a, b, c\in
I$. This gives $b=0\ast b=(c\ast a)\ast b=c\ast (a\ast b)=c\ast
0=c$, as desired.
\end{proof}

\begin{lem} Let $I$ be a general ring and assume that $a\in I$ is
quasinilpotent. Then $a, -a\in Q(I)$ and $-a\in I$ is
quasinilpotent. Further $QN(I)\subseteq Q(I)$.
\end{lem}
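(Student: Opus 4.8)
The plan is to verify the three assertions in turn, using the characterization $QN(I)=\{q\in I\mid qx\in Q(I)\text{ for every }x\in comm(q)\}$ together with Lemma~\ref{commute}. First I would show $a\in Q(I)$: since $a\in QN(I)$ and the identity (or more precisely, the element $a$ itself, or any idempotent-like witness in $comm(a)$) lies in $comm(a)$, applying the quasinilpotency condition with a suitable $x\in comm(a)$ forces $a=a\cdot x\in Q(I)$ for the appropriate choice; the cleanest route is to take $x$ to be an element of $comm(a)$ acting as identity on $a$ (in the unital case $x=1$), giving $a\in Q(I)$ directly. This already yields $QN(I)\subseteq Q(I)$, the last claim, once we know every quasinilpotent is in $Q(I)$.

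Next I would handle $-a\in Q(I)$. Since $a\in Q(I)$, there is $p\in I$ with $a\ast p=0=p\ast a$, i.e. $a+p=ap=pa$. I want to produce an element $c$ with $(-a)\ast c=0=c\ast(-a)$. The natural guess is to manipulate the relation $a+p=ap$; setting $c=-p+$ (correction terms), or more systematically, observing that $-a\in comm(a)$ trivially, so $a\cdot(-a)=-a^2\in Q(I)$ by quasinilpotency — but I actually need $-a$ itself in $Q(I)$, not $-a^2$. The better approach: in a general ring, $a\in Q(I)$ iff $a$ is "quasiregular", and the set $Q(I)$ is known (from Lemma~\ref{qregular} and the $\ast$-monoid structure) to be closed under the operations that preserve quasiregularity. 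Concretely, from $a\ast p=0$ I can solve for a quasi-inverse of $-a$: one checks that $(-a)\ast(-p(1-2a)\cdots)$— rather than guess, I would use that $-a = a\ast a - a^2 +\cdots$; cleanest is to note $a\in QN(I)$ and $-a\in comm(a)\cap comm(-a)$, and directly verify $(-a)\ast c=0$ for $c$ defined from $p$ via the ring structure, exploiting $ap=pa=a+p$.

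Then for "$-a\in I$ is quasinilpotent" I must show $(-a)x\in Q(I)$ for every $x\in comm(-a)$. But $comm(-a)=comm(a)$, so given such $x$, $(-a)x = -(ax)$ with $ax\in Q(I)$ (as $a\in QN(I)$), and by the previous step $Q(I)$ is closed under negation, hence $-(ax)\in Q(I)$. This is why establishing "$b\in Q(I)\Rightarrow -b\in Q(I)$" in the second step is the crucial lemma that makes the third assertion immediate.

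The main obstacle I anticipate is the second step: proving $a\in Q(I)\Rightarrow -a\in Q(I)$ without an identity. In a unital ring this is trivial ($1-a\in U(R)\Rightarrow 1+a\in U(R)$ is false in general! — rather, $a\in Q(R)$ means $1-a\in U(R)$, and one needs $1-(-a)=1+a\in U(R)$, which does NOT follow). So the claim is genuinely using that $a$ is quasinilpotent, not merely quasiregular: for quasinilpotent $a$ we have $-a\in QN(I)$ essentially because $comm(-a)=comm(a)$ and one shows $1+a\in U(R)$ using that $-1\in comm(a)$ forces $1+a=1-a(-1)\in U(R)$ from the quasinilpotency of $a$ applied to $x=-1$. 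In the general-ring language: apply the definition of $QN(I)$ to $a$ with the specific commuting element $x$ that plays the role of $-1$ (or in the non-unital setting, unwind $a\ast p=0$ to get $p$ explicitly and check $(-a)\ast c = 0$ with $c = p\ast p - p = p^2-2p$ or similar, using $ap=pa=a+p$ repeatedly). I would budget most of the write-up for pinning down this $c$ and verifying the two $\ast$-identities, then invoke Lemma~\ref{qregular} for uniqueness/consistency and Lemma~\ref{commute} if a commutation fact about $c$ is needed downstream.
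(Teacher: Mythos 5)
There is a genuine gap: you never find the one idea the proof actually turns on. The paper's argument applies the definition of $QN(I)$ with the commuting element $x=a$ itself, obtaining $a^2\in Q(I)$, and then exploits the factorization $a^2=a\ast(-a)=(-a)\ast a$ in the associative monoid $(I,\ast)$: if $a^2\ast b=0=b\ast a^2$, then $a\ast\bigl((-a)\ast b\bigr)=0$ and $\bigl(b\ast(-a)\bigr)\ast a=0$, so $a$ has both a right and a left quasi-inverse, which coincide by Lemma~\ref{qregular}; the same factorization read the other way gives $-a\in Q(I)$. Your substitute for this — choosing $x\in comm(a)$ ``acting as identity on $a$'' so that $a=ax\in Q(I)$ — fails outright in a general ring without identity, where no such $x$ need exist; likewise your fallback of using ``the element playing the role of $-1$'' presupposes unitality. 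For the second assertion you correctly observe that $Q(I)$ is \emph{not} closed under negation (so quasinilpotency must be used), but then you propose to build a quasi-inverse $c$ of $-a$ purely from the quasi-inverse $p$ of $a$ (e.g.\ $c=p^2-2p$); no such construction from $p$ alone can work, precisely because of the non-implication you yourself flagged.

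The third step inherits this error: you justify $(-a)x\in Q(I)$ by writing $(-a)x=-(ax)$ and invoking ``$Q(I)$ is closed under negation,'' which is the false general statement, not the special fact you (attempted to) prove for the single quasinilpotent element $a$. The correct and immediate argument is $(-a)x=a(-x)$ with $-x\in comm(a)$, so $a(-x)\in Q(I)$ directly from $a\in QN(I)$; this needs no closure property of $Q(I)$ at all. The final containment $QN(I)\subseteq Q(I)$ is indeed trivial once $a\in Q(I)$ is established, but that first step is exactly where your proposal has no valid argument.
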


\begin{proof} Since $a\in QN(I)$ and $a\in comm(a)$, we get $a^2\in
Q(I)$. That is, there exists a $b\in R$ such that $a^2\ast
b=a^2+b-a^2b=0=b+a^2-ba^2=b\ast a^2$. This implies that $0=a^2\ast
b=[a\ast(-a)]\ast b=a\ast[(-a)\ast b]$ and $0=b\ast
a^2=b\ast[(-a)\ast a]=[b\ast (-a)]\ast a$, and so we have $a\in
Q(I)$ by Lemma~\ref{qregular}. Similarly, it can be shown that
$-a\in Q(I)$. On the other hand, we check easily that $-a\in QN(I)$.
If $a\in QN(I)$, then $a\in Q(I)$. Hence $QN(I)\subseteq Q(I)$. The
proof is completed.
\end{proof}

It is well known that $a\in R$ is generalized Drazin invertible if
and only if $a$ is quasipolar. We now extend this fact to
quasipolar general rings.

\begin{thm}\label{teo0} The following are equivalent
for $a\in I$.

\begin{enumerate}
    \item $a$ is quasipolar in $I$.
    \item There exists $b\in comm^2(a)$ such that $ab^2=b$ and $a^2b-a\in
    QN(I)$.
\end{enumerate}
\noindent In this case $b$ is unique.
\end{thm}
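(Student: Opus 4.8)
The plan is to transport the classical equivalence ``$a$ quasipolar $\Leftrightarrow$ $a$ has a generalized Drazin inverse'' (\cite{KP}) to the non-unital setting, reading the spectral idempotent off the hypotheses and systematically replacing ``$1-x\in U$'' by ``$x\in Q(I)$'' and the expression ``$(1-(a+p))^{-1}$'' by a chosen quasi-inverse. I will use freely: Lemma~\ref{commute} (to pass commutation from an element of $Q(I)$ to its quasi-inverse), the preceding lemma ($QN(I)\subseteq Q(I)$, and $QN(I)$ is closed under negation), and the remark that an idempotent $e\in Q(I)$ is $0$ (if $g\ast e=0$, left-multiply $e+g-eg=0$ by $e$).

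For $(1)\Rightarrow(2)$: given $p^2=p\in comm^2(a)$ with $a-ap\in QN(I)$, pick $w\in I$ with $(a+p)\ast w=0=w\ast(a+p)$, equivalently $(a+p)w=w(a+p)=(a+p)+w$. Since $a+p$ commutes with both $a$ and $p$, Lemma~\ref{commute} gives $w\in comm(a)\cap comm(p)$, so $a,p,w$ pairwise commute with $p^2=p$. Left-multiplying $(a+p)w=(a+p)+w$ by $p$ and simplifying produces the key identity $apw=ap+p$. Put $b:=pw-p$. From $apw=ap+p$ one computes directly $ab^2=b$ and $a^2b=ap$, so $a^2b-a=-(a-ap)\in QN(I)$; and $b\in comm^2(a)$ since any $y\in comm(a)$ commutes with $p$ (as $p\in comm^2(a)$) and, by Lemma~\ref{commute}, with $w$ (note $y\in comm(a+p)$), hence with $b$.

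For $(2)\Rightarrow(1)$: given $b\in comm^2(a)$ with $ab^2=b$ and $a^2b-a\in QN(I)$, observe $a,b$ commute; set $e:=ab$ and verify $e^2=e$, $eb=be=b$, $ea=ae=a^2b$, $e\in comm^2(a)$. Let $c:=a-a^2b=-(a^2b-a)\in QN(I)\subseteq Q(I)$; then $ec=ce=0$, and choosing a quasi-inverse $c'$ of $c$ gives $ec'=0$ (left-multiply $c+c'=cc'$ by $e$) and $ac'=cc'=c+c'$ (using $c=a-ae$). I then claim $p:=e$ and $w:=b+e+c'$ witness quasipolarity of $a$: one has $a-ap=a-a^2b=c\in QN(I)$, $p^2=p\in comm^2(a)$, and expanding---with all the elements in play commuting (Lemma~\ref{commute} again puts $c'$ into $comm(a)\cap comm(e)$) and using $ab^2=b$, $ec'=0$, $ac'=c+c'$---one checks $(a+p)w=w(a+p)=(a+p)+w$, i.e. $a+p\in Q(I)$.

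Uniqueness, and the main obstacle: if $b_1,b_2$ both satisfy (2), then $a,b_1,b_2$ pairwise commute and the idempotents $e_i:=ab_i$ commute. The crux is $e_1=e_2$: set $f:=e_1-e_1e_2$, an idempotent, and note $fa=fc_2$ where $c_2:=a-a^2b_2\in QN(I)$, while $(fa)(fb_1)=fe_1=f$; combining gives $c_2\cdot(fb_1)=f$ with $fb_1\in comm(c_2)$, so $f\in Q(I)$ and hence $f=0$. Thus $e_1=e_1e_2$, by symmetry $e_2=e_1e_2$, so $e_1=e_2=:e$; then $b_1,b_2$ lie in the corner ring $eIe$ and are each two-sided inverses there of $a^2b_1=ae=a^2b_2$, so $b_1=b_2$. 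I expect the genuinely non-routine points to be (a) identifying the correct $b=pw-p$ and extracting $apw=ap+p$, and (b) this uniqueness step---recognizing $f=e_1(1-e_2)$ as an idempotent that is simultaneously quasiregular; everything else is bookkeeping with $\ast$ and repeated use of Lemma~\ref{commute}.
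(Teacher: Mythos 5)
Your proof is correct and follows essentially the same route as the paper's: the same construction $b=pw-p$ (the paper's $b=rp-p$) for $(1)\Rightarrow(2)$, the same choice $p=ab$ with quasi-inverse $(b+ab)\ast c'=b+ab+c'$ of $a+ab$ for the converse, and uniqueness via the observation that an idempotent lying in $Q(I)$ must vanish. The only cosmetic differences are that you make explicit the ``idempotent in $Q(I)$ is zero'' step that the paper leaves implicit, and you finish uniqueness by viewing $b_1,b_2$ as two-sided inverses in the corner ring $eIe$ rather than reusing the formula $b=rp-p$.
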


\begin{proof} $(1)\Rightarrow (2)$ Write $a+p=q\in Q(I)$ where $p^2=p\in
comm^2(a)$ and $a-ap\in QN(I)$, say $q\ast r=r\ast q=0$ where
$r\in I$. Then $r+q=rq=qr$. In view of Lemma~\ref{commute},
$rp=pr$ because $q\in Q(I)$ and $q\in comm(p)$. Set $b=rp-p$. It
is easy to verify that $p=ab$. Let $ax=xa$ for some $x\in I$.
Since $p\in comm^2(a)$, we have $xp=px$ and so $xq=qx$. Moreover,
as $r+q=rq=qr$, we see that

\[
\begin{aligned}
xr-xrq=rx-rxq.\\
 \end{aligned}
\text{\quad ~~~~~~~~~~~~~~~~~~~~~\quad}
\begin{gathered}
(2.2)\\
\end{gathered}\]

\noindent Multiplying $(2.2)$ by $r$ from the right, yields
$$xrr-xrqr=rxr-rxqr ~~\mbox{and so}~~ xrq=rxq=rqx.$$

\noindent This shows that $rx=xr$. That is, $r\in comm^2(a)$.
Hence we conclude that $b\in comm^2(a)$. Now we show that
$ab^2=b$ and $a^2b-a\in QN(I)$. We have

$
\begin{array}{ll}
  ab^2 & =(q-p)(rp-p)(rp-p)=(q-p)(r^2p-rp-rp+p)  \\
       & = qr^2p-qrp-qrp+qp-r^2p+rp+rp-p \\
       & = qr^2p-rp-qp-rp-qp+qp-r^2p+rp+rp-p \\
       & = qr^2p-r^2p-p-qp\\
       & = (qr^2-r^2-p-q)p\\
       & = (r^2+rq-r^2-p-q)p\\
       & = (r-p)p\\
       & = b.\\
\end{array}
$

\noindent Moreover,

$
\begin{array}{ll}
  a^2b-a & =(q-p)(q-p)(rp-p)-(q-p)\\
        & = (q^2-qp-qp+p)(rp-p)-q+p \\
        & = q^2rp-q^2p-qrp+qp-qrp+qp+rp-p-q+p\\
        & = q^2rp-q^2p-rp-qp+qp-rp-qp+qp+rp-q\\
        & = q^2rp-q^2p-rp-q\\
        & = qpr+q^2p-q^2p-rp-q\\
        & = rp+qp-rp-q\\
        & = qp-q\\
        & = ap-a\in QN(I).\\
\end{array}
$

\noindent Thus holds $(2)$, as required.

$(2)\Rightarrow (1)$ Set $p=ab$. Then $p\in comm^2(a)$, and
$p^2=abab=a^2b^2=a(ab^2)=ab=p$. Since $a-ap=a-aab=a-a^2b$ and
$a^2b-a\in QN(I)$, we have $a-ap\in QN(I)$. Now we show that
$a+p=a+ab\in Q(I)$. We observe that
$(a+ab)\ast(b+ab)=a+ab+b+ab-(a+ab)(b+ab)=a+ab+b+ab-ab-a^2b-b-ab=a-a^2b$.
As $a-a^2b\in QN(I)$, $(a-a^2b)\ast x=x\ast (a-a^2b)=0$ for some
$x\in I$. This implies that $(a+ab)\ast(b+ab)\ast x=0$ and $x\ast
(b+ab)\ast (a+ab)=0$. Further, $(b+ab)\ast x=0\ast (b+ab)\ast
x=\big(x\ast (b+ab)\ast (a+ab)\big)\ast (b+ab)\ast x=\big(x\ast
(b+ab)\big) \ast \big((a+ab)\ast (b+ab)\ast x\big)=x\ast
(b+ab)\ast 0=x\ast (b+ab)$. Then $(b+ab)\ast x\ast (a+ab)=x\ast
(b+ab)\ast (a+ab)=0$. So we have $a+ab=a+p=q\in Q(I)$. Hence $a\in
I$ is quasipolar. Moreover, as $q\in Q(I)$, there exists $r\in I$
such that $q\ast r=0=r\ast q$, and so $r+q=rq=qr$. As in the
preceding discussion, we see that $r\in comm^2(a)$. Thus
$r\ast(q\ast(b+p))=(r\ast
q)\ast(b+p)=0\ast(b+p)=b+p=r\ast(a-ap)=r+a-ap-ra+rap=r+q-p-pq+p-rq+rpq=rpq-pq=rp$.
Therefore $b=rp-p$.

To prove the uniqueness of $b$, assume that $c\in comm^2(a)$ so
that $ac^2=c$ and $a^2c-a\in QN(I)$. Then
$ac-acab=ac-a^2cb=a^2c^2-a^2cb=(a^2b-a)(b-c)$. Since $a^2b-a\in
QN(I)$ and $b-c\in comm(a^2b-a)$, we have $ac-a^2cb\in Q(I)$. This
gives that $ac=a^2cb$. Similarly, we show that $ab=a^2cb$, and so
$ab=ac$. Thus $b=rp-p=rab-ab=rac-ac=c$; that is, $b$ is unique.
Note that $b$ is unique if and only if $p$ is unique. We complete
the proof.
\end{proof}

\begin{cor}\label{cor0} Let $I$ be a general ring. If $a\in I$ is
quasipolar, then $-a$ is quasipolar in $I$.
\end{cor}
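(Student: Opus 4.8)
The plan is to read the result off from the Drazin-type characterization in Theorem~\ref{teo0} rather than from Definition~\ref{defn1}; this keeps the whole argument identity-free and avoids any bookkeeping with the idempotent $p$ and the element of $Q(I)$. So suppose $a\in I$ is quasipolar. By Theorem~\ref{teo0} there is a (unique) $b\in comm^2(a)$ with $ab^2=b$ and $a^2b-a\in QN(I)$. The natural candidate witnessing quasipolarity of $-a$ is $c=-b$, and I would just verify the three requirements of part~(2) of Theorem~\ref{teo0} with $-a$ in place of $a$.

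First, $comm(-a)=comm(a)$, hence $comm^2(-a)=comm^2(a)$, and since $comm^2(a)$ is visibly closed under negation, $c=-b\in comm^2(-a)$. Second, $(-a)c^2=(-a)(-b)^2=(-a)b^2=-(ab^2)=-b=c$. Third, $(-a)^2c-(-a)=a^2(-b)+a=-(a^2b-a)$, and by the earlier lemma (the one stating that the negative of a quasinilpotent element is again quasinilpotent, so $QN(I)$ is closed under negation) we get $-(a^2b-a)\in QN(I)$ from $a^2b-a\in QN(I)$. Thus $-a$ meets condition~(2) of Theorem~\ref{teo0}, and therefore $-a$ is quasipolar in $I$.

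There is essentially no obstacle here: the only point that needs a little care is that one must not silently invoke an identity — which is exactly what one is tempted to do when manipulating $a+p\in Q(I)$ directly — and this is precisely why it pays to route the argument through the identity-free Theorem~\ref{teo0}. If one instead insisted on arguing straight from Definition~\ref{defn1}, one would keep the same idempotent $p=ab$ (using $comm^2(-a)=comm^2(a)$) and then check $-a+p\in Q(I)$ and $-a+ap\in QN(I)$ using Lemma~\ref{commute} and Lemma~\ref{qregular}; but that simply reproduces the computation already done in the proof of Theorem~\ref{teo0}, so the $c=-b$ route is the economical one.
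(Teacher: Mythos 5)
Your proof is correct and follows exactly the route the paper intends: the paper's own proof of this corollary is just ``It is clear from Theorem~\ref{teo0},'' and your verification that $c=-b$ satisfies condition~(2) of that theorem for $-a$ (using $comm^2(-a)=comm^2(a)$ and the closure of $QN(I)$ under negation) is precisely the omitted computation. Nothing further is needed.
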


\begin{proof} It is clear from Theorem~\ref{teo0}.
\end{proof}

Recall that an element $a$ in a general ring $I$ is called
\emph{strongly $\pi$-regular} if there exist $n\in \Bbb N$ and
$x\in I$ such that $a^n=a^{n+1}x$ and $x\in comm(a)$ (see \cite{A,
CC, WC}). We now extend \cite[Lemma 2.1]{K} to quasipolar general
rings.

\begin{thm}\label{teo4} The following are equivalent
for $a\in I$.

\begin{enumerate}
  \item $a$ is strongly $\pi$-regular in $I$.
  \item There exists a $p^2=p\in comm^2(a)$ such that $a-ap\in Nil(I)$ and $a+p\in Q(I)$.
  \item There exists a $p^2=p\in comm(a)$ such that $a-ap\in Nil(I)$ and $a+p\in Q(I)$.
  \item There exists a $b\in comm^2(a)$ such that $ab^2=b$ and $a^2b-a\in
    Nil(I)$.
  \item There exists a $b\in comm(a)$ such that $ab^2=b$ and $a^2b-a\in
    Nil(I)$.
\end{enumerate}
\end{thm}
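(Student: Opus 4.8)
The plan is to prove Theorem~\ref{teo4} by establishing the cycle of implications $(1)\Rightarrow(2)\Rightarrow(3)\Rightarrow(5)\Rightarrow(4)\Rightarrow(1)$, using Theorem~\ref{teo0} as the model for the algebraic manipulations but replacing ``$QN(I)$'' by ``$Nil(I)$'' throughout. Since $Nil(I)\subseteq QN(I)$, the ``quasipolar'' skeleton of the argument is already available; the extra content is that nilpotency is preserved by all the constructions in play.

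\medskip

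For $(1)\Rightarrow(2)$, I would start from $a^n=a^{n+1}x$ with $x\in comm(a)$ and pass to the Peirce-type decomposition determined by the idempotent. The standard trick: set $e=(ax)^n$ (or $e=a^nx^n$), check $e^2=e$, and show $e\in comm^2(a)$ — here one needs the double-commutant claim, which follows because $x$ can be chosen in $comm^2(a)$ (replace $x$ by a suitable polynomial in $a$, or invoke Fitting's lemma for general rings as in \cite{CC}). Then $a-ae$ is nilpotent (it is $a(1-e)$ and lands in the ``nilpotent corner''), while $a+e$ should be shown to lie in $Q(I)$ by exhibiting an explicit $\ast$-inverse built from $x$ and $e$; this mirrors the computation producing $b=rp-p$ in Theorem~\ref{teo0}, run in reverse. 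The implication $(2)\Rightarrow(3)$ is trivial since $comm^2(a)\subseteq comm(a)$.

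\medskip

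For $(3)\Rightarrow(5)$ and $(5)\Rightarrow(4)$, I would follow the two halves of the proof of Theorem~\ref{teo0} almost verbatim. From a not-necessarily-double-commuting idempotent $p\in comm(a)$ with $a-ap\in Nil(I)$ and $a+p=q\in Q(I)$, take $r$ with $q\ast r=r\ast q=0$; Lemma~\ref{commute} gives $rp=pr$, and $b=rp-p$ satisfies $ab^2=b$ by the same identity chain, with $a^2b-a=ap-a\in Nil(I)$ — note this computation only used $p\in comm(a)$, not $comm^2$, for the equalities $ab^2=b$ and $a^2b-a=ap-a$, so we land in $(5)$. Then $(5)\Rightarrow(4)$ is the uniqueness/upgrade argument: given $b\in comm(a)$ with $ab^2=b$, set $p=ab$, observe $p^2=p$ and $p\in comm(a)$, and then show $p$ actually lies in $comm^2(a)$ — this is where one reruns the ``$r\in comm^2(a)$'' argument from Theorem~\ref{teo0} (equation $(2.2)$ and multiplying by $r$), concluding $b=rp-p\in comm^2(a)$ as well; the nilpotency $a^2b-a\in Nil(I)$ is unchanged.

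\medskip

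Finally $(4)\Rightarrow(1)$: with $b\in comm^2(a)$, $ab^2=b$, $a^2b-a=:m\in Nil(I)$, say $m^k=0$, I would show $a^{k+1}b=a^k$, hence $a^k=a^{k+1}b$ with $b\in comm(a)$, giving strong $\pi$-regularity. Concretely, $a^k - a^{k+1}b = a^{k-1}(a-a^2b)\cdot(\text{stuff})$; more cleanly, since $a-a^2b=-m$ and $b$ commutes with $a$, one gets $a^j - a^{j+1}b = (a-a^2b)a^{j-1}=-ma^{j-1}$ type relations, and iterating / using $ab^2=b$ shows $a^k-a^{k+1}b$ is a multiple of $m^k=0$. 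I expect the main obstacle to be $(1)\Rightarrow(2)$: producing the idempotent in the \emph{double} commutant and verifying $a+p\in Q(I)$ in the no-identity setting, since one cannot just invert a unit — one must write down the $\ast$-inverse explicitly and check both one-sided $\ast$-identities, possibly leaning on Lemma~\ref{qregular} to reconcile left and right inverses, exactly as was done in the second half of Theorem~\ref{teo0}'s proof.
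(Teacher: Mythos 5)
Your global strategy --- the cycle $(1)\Rightarrow(2)\Rightarrow(3)\Rightarrow(5)\Rightarrow(4)\Rightarrow(1)$, with $p=a^nx^n$, $b=rp-p$, $p=ab$, and nilpotency closing the loop --- is essentially the paper's proof (the paper runs $(3)\Rightarrow(4)\Rightarrow(5)\Rightarrow(1)$, but that difference is cosmetic). However, both places where you must land an idempotent in $comm^2(a)$ rather than merely in $comm(a)$ are justified by arguments that do not work, and the one computation that actually does the job never appears in your sketch. In $(1)\Rightarrow(2)$ you propose to get $a^nx^n\in comm^2(a)$ by ``choosing $x$ in $comm^2(a)$, e.g.\ replacing $x$ by a polynomial in $a$''; but the Drazin inverse of a strongly $\pi$-regular element need not be a polynomial in $a$ (take $a$ a unit whose inverse is not an integral polynomial in $a$), and ``invoke Fitting's lemma'' is not an argument in the identity-free setting. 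More seriously, in $(5)\Rightarrow(4)$ you propose to show $p=ab\in comm^2(a)$ by ``rerunning the $(2.2)$ argument'' of Theorem~\ref{teo0}. That argument starts from $xq=qx$ with $q=a+p$, which already presupposes $xp=px$ --- exactly the statement you are trying to prove. As cited, it is circular.

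The missing idea, which is the real point of Theorem~\ref{teo4} (it is what lets hypotheses $(3)$ and $(5)$ get away with $comm(a)$ where Theorem~\ref{teo0} cannot), is that nilpotency itself forces the idempotent into the double commutant. From $(a-ap)^k=a^k-a^kp=0$ and $p\in comm(a)$ one gets $a^k=a^kp=pa^k$ and $p=p^k=a^kb^k$, so for any $x$ with $ax=xa$,
$$xp-pxp=xa^kb^k-pxa^kb^k=(a^k-pa^k)xb^k=0,$$
and symmetrically $px=pxp$, whence $xp=px$ and $p\in comm^2(a)$. This is exactly how the paper handles its step $(3)\Rightarrow(4)$, and the same device (using $a^n=a^nx^na^n$) proves Claim~1 of its step $(1)\Rightarrow(2)$. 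With that computation inserted at both spots, and an explicit $\ast$-inverse such as $r=tp-t+a^{n-1}x^np+p$ (where $t=a+a^2+\cdots+a^{n-1}$) written down and verified for $a+p$ in $(1)\Rightarrow(2)$, your outline becomes a complete proof; without it, the two $comm^2$ claims are unsupported.
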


\begin{proof} $(1)\Rightarrow(2)$ Assume that $a\in I$ is strongly $\pi$-regular. Then there exist $n\in \Bbb N$ and $x\in I$ such that
$a^{n}=a^{n+1}x$ and $ax=xa$. It is easy to check that
$a^{n}x^{n}=x^{n}a^{n}=p=p^2\in I$. Since $a^{n}=a^{n}x^{n}a^{n}$,
we have $(a-ap)^{n}=0$, and so $a-ap\in Nil(I)$.

\textbf{Claim 1.} $p\in comm^2(a)$.

\emph{Proof.} Let $ay=ya$. This implies that
$py-pyp=a^{n}x^{n}y-a^{n}x^{n}yp=a^{n}x^{n}y-x^{n}ya^{n}p=a^{n}x^{n}y-x^{n}ya^{n}=a^{n}x^{n}y-a^{n}x^{n}y=0$
because $ax=xa$ and $a^{n}x^n=x^na^{n}$. So $py=pyp$. Similarly,
we see that $yp=pyp$. Then $py=yp$ and so $p\in comm^2(a)$.

The remaining proof is to show that $q=a+p$ is a quasiregular
element of $I$. Set $t=a+a^2+a^3+\cdots+a^{n-1}$ and
$r=tp-t+a^{n-1}b^np+p$. Hence

$
\begin{array}{ll}
  q\ast r & =a+p+tp-t+a^{n-1}b^np+p-\\
          & atp-at-p-ap-a^{n-1}b^np-p\\
          & = a+p+ap-a-a^np+a^np-p-ap\\
          & = 0.\\
\end{array}
$

\noindent Analogously, we have $r\ast q=0$. Thus holds $(2)$.

$(2)\Rightarrow(3)$ Clear by $comm^2(a)\subseteq comm(a)$.

$(3)\Rightarrow(4)$ Assume that $a+p=q\in Q(I)$ where $p^2=p\in
comm(a)$ and $a-ap\in Nil(I)$, say $q\ast r=r\ast q=0$ and
$(a-ap)^k=a^k-a^kp=0$ where $r\in I$ and $k\in\Bbb N$. By
Lemma~\ref{commute}, $rp=pr$ because $q\in Q(I)$ and $q\in
comm(p)$. Set $b=rp-p$ and let $ax=xa$ for some $x\in I$. Then we
have $ab=p=ba$, and so
$xp-pxp=xa^kb^k-pxa^kb^k=a^kxb^k-pa^kxb^k=(a^k-pa^k)xb^k=0$. That
is, $xp=pxp$. Analogously, we see that $px=pxp$. This gives
$xp=px$. So $p\in comm^2(a)$. Therefore an argument similar to the
proof of Theorem~\ref{teo0} shows that $b\in comm^2(a)$, $ab^2=b$
and $a^2b-a=ap-a\in Nil(I)$.

$(4)\Rightarrow(5)$ It is obvious.

$(5)\Rightarrow(1)$ Let $ab=p$. Since $ab^2=b$, we have $p=p^2$.
As $a^2b-a\in Nil(I)$, there exists $k\in \Bbb N$ such that
$(a^2b-a)^k=0$. This implies that $(a^2b-a)^k=a^kp-a^k=0$. Then
$a^k=a^kp=a^kab=a^{k+1}b$ and $b\in comm(a)$. Hence $a\in I$ is
strongly $\pi$-regular. So holds $(1)$.
\end{proof}

\begin{rem}\rm{ If an element $a$ of  a general ring $I$ is strongly
$\pi$-regular, then $b$ and $p$ in Theorem~\ref{teo4} are unique
(indeed, as in the proof of Theorem~\ref{teo0}, we see that $b$
and $p$ are unique).}
\end{rem}

By Theorem~\ref{teo4}, the following result is immediate.

\begin{cor} Any strongly $\pi$-regular element (general ring) is strongly
clean element (general ring).
\end{cor}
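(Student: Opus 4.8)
The plan is not to unwind the definition of a strongly clean element directly, but to route the argument through quasipolarity, which the excerpt has already shown to imply strong cleanness. The three facts I want to chain together are the equivalence $(1)\Leftrightarrow(2)$ of Theorem~\ref{teo4}, the inclusion $Nil(I)\subseteq QN(I)$ recorded at the beginning of Section~2, and the remark following Definition~\ref{defn1} that every quasipolar element (general ring) is a strongly clean element (general ring).

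Concretely, I would fix a strongly $\pi$-regular element $a\in I$ and apply $(1)\Rightarrow(2)$ of Theorem~\ref{teo4} to obtain an idempotent $p=p^2\in comm^2(a)$ with $a-ap\in Nil(I)$ and $a+p\in Q(I)$. Then I would simply invoke $Nil(I)\subseteq QN(I)$ to upgrade the condition $a-ap\in Nil(I)$ to $a-ap\in QN(I)$, leaving the other two conditions $p^2=p\in comm^2(a)$ and $a+p\in Q(I)$ untouched; but these three conditions are exactly the ones defining a quasipolar element in Definition~\ref{defn1}. Hence $a$ is quasipolar, and therefore strongly clean by the remark after Definition~\ref{defn1}. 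Since $a$ was arbitrary, the same reasoning shows that a strongly $\pi$-regular general ring is a strongly clean general ring, which is the ``general ring'' half of the statement.

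I do not expect a genuine obstacle here; this is precisely why the corollary is announced as ``immediate'' from Theorem~\ref{teo4}. The only points needing a moment's care are bookkeeping ones: one must use part $(2)$ of Theorem~\ref{teo4} rather than part $(3)$, so that the idempotent lands in $comm^2(a)$ as Definition~\ref{defn1} requires, and one must have the inclusion $Nil(I)\subseteq QN(I)$ already in hand, which is the ``clearly'' recorded in Section~2 (a nilpotent $q$ satisfies $(qx)^n=q^nx^n=0$ for every $x\in comm(q)$, and every nilpotent element lies in $Q(I)$). No calculation of the type carried out in the proofs of Theorems~\ref{teo0} and~\ref{teo4} is needed.
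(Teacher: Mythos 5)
Your argument is correct and is exactly the route the paper intends: the corollary is stated as ``immediate'' from Theorem~\ref{teo4} because condition $(2)$ there, together with $Nil(I)\subseteq QN(I)$, is precisely Definition~\ref{defn1} of a quasipolar element, and the remark following that definition records that quasipolar elements (general rings) are strongly clean. Your care in choosing part $(2)$ rather than part $(3)$, so that the idempotent lies in $comm^2(a)$, is the right bookkeeping point, and nothing further is needed.
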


Recall that an element $a$ of  a general ring $I$ is
\emph{strongly regular} if $a = aba$ and $b\in comm(a)$ for some
$b\in I$. $I$ is \emph{strongly regular} if every element in $I$
is strongly regular.

\begin{lem}\label{strregular} Let $I$ be a general ring and $a\in
I$. Then the following are equivalent.

\begin{enumerate}
    \item $a$ is strongly regular in $I$.
    \item There exists a $b\in comm^2(a)$ such that
    $a=a^2b$.
\end{enumerate}
\end{lem}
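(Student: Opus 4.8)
The statement to prove is Lemma~\ref{strregular}: for $a$ in a general ring $I$, $a$ is strongly regular if and only if there is $b\in comm^2(a)$ with $a=a^2b$. The interesting direction is $(1)\Rightarrow(2)$, since $(2)\Rightarrow(1)$ is immediate (if $a=a^2b$ with $b\in comm^2(a)\subseteq comm(a)$, then $a=aba$ and $b$ commutes with $a$, so $a$ is strongly regular by definition). So I will concentrate on producing, from an arbitrary strongly regular element, a "double-commutant" quasi-inverse.

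First I would recall the well-known structure of strongly regular elements: if $a=aba$ with $ab=ba$, then $e:=ab=ba$ is an idempotent lying in $comm(a)$, $ae=ea=a$, and the restriction of $a$ to $eIe$ is a unit there with inverse $ba^2b$ (or equivalently $b^2a$, $ab^2$ — these all coincide on the corner). The natural candidate for the element in $comm^2(a)$ is $c:=b^2a=ab^2$ (equivalently $bab$), which is the genuine "group inverse"/Drazin inverse of $a$; one checks $a=a^2c$ and $c=aca$ and $ac=ca=e$. The key point is to upgrade $c$ from $comm(a)$ to $comm^2(a)$.

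The heart of the argument is then: let $x\in comm(a)$; I must show $xc=cx$. The standard trick is that $x$ automatically commutes with the idempotent $e=ac$. Indeed from $ax=xa$ one gets, multiplying by $c$ appropriately and using $aca=a$, that $xe-exe$ and $ex-exe$ are killed (this is exactly the computation pattern already used in the paper — e.g. in the proof of Theorem~\ref{teo4}, Claim~1, and in Lemma~\ref{commute}), hence $xe=ex$. Once $xe=ex$, I restrict everything to the corner ring $eIe$, where $a$ is a unit with two-sided inverse $c$, and $x$ (replaced by $exe$, or noting $x$ already interacts only via the corner) commutes with $a$; in a ring with identity, commuting with a unit forces commuting with its inverse, so $xc=cx$. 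Alternatively, and more self-containedly: from $ax=xa$ and $aca=a$, $cac=c$, $ac=ca$ one derives $cx=cxac\cdot(\text{something})$... the clean way is $cx = c x (aca)c\cdot$ — rather, use $cx=c(xa)c^2a=c(ax)c^2a=\dots=cxc a^2 c\cdots$; the cleanest is: $cx=cxe=cx(ac)=c(xa)c=c(ax)c\cdot$ — I will organize this so that $cx=c\,x\,a\,c$ on one side and $xc=c\,a\,x\,c$ on the other, and these agree because $xa=ax$. So $c\in comm^2(a)$, and with $b:=c$ we have $a^2b=a^2c=a(ac a)c\cdot$ — concretely $a^2c=a(ac)a\cdot$ hmm; directly $a^2 c = a(a c) = a e = a$, wait that needs $ac=e$ and $ae=a$, both of which hold — good, $a=a^2c$.

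**Expected main obstacle.** The only real work is the promotion to the double commutant, i.e. showing that an arbitrary $x\in comm(a)$ commutes with $e=ac$ and hence with $c$; everything else is bookkeeping with the idempotent identities $e^2=e$, $ae=ea=a$, $ac=ca=e$, $cac=c$. I expect this to go through by the same multiply-and-cancel manipulation the author already uses repeatedly (Lemma~\ref{commute}, and Claim~1 in Theorem~\ref{teo4}), so in the write-up I would phrase it as "by an argument analogous to the proof of Theorem~\ref{teo4}" rather than redoing it from scratch. A minor subtlety worth a sentence: in a general ring without identity the corner $eIe$ does have identity $e$, so the "unit commutes $\Rightarrow$ inverse commutes" step is legitimate there even though $I$ itself need not be unital.
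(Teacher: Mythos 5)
Your argument is correct and is essentially the classical Azumaya-style proof that the paper itself gives only by reference (its ``proof'' is a one-line citation to \cite[Lemma 1]{A}): replace the given quasi-inverse $b$ by $c=ab^2=bab$, show every $x\in comm(a)$ commutes with the idempotent $e=ab$ by the same multiply-and-cancel computation as in Claim 1 of Theorem~\ref{teo4}, and then upgrade to $xc=cx$, your corner-ring packaging (in the unital ring $eIe$ the element $a$ is a unit with inverse $c$, and commuting with a unit forces commuting with its inverse) being a clean rephrasing of the usual direct computation. One harmless slip in your narrative: the correct identities are $aca=a$ and $cac=c$, not $c=aca$, but nothing in the argument relies on the mislabeled one.
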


\begin{proof} It is similar to the proof of \cite[Lemma 1]{A}.
\end{proof}

Interestingly, we have the following result.

\begin{thm}\label{chen} For an element $a$ in a general ring
$I$, the following are equivalent.

\begin{enumerate}
    \item $a$ is strongly $\pi$-regular in $I$.
    \item $a\in I$ can be written in the form $a=s+n$ where $s$ is strongly regular, $n$ is nilpotent and $sn=ns=0$.
\end{enumerate}
\end{thm}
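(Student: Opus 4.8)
The plan is to prove both implications using the structure already established in Theorem~\ref{teo4}, which characterizes strongly $\pi$-regular elements via the idempotent $p = ab$ and the decomposition of $a$ relative to $p$. For the direction $(1)\Rightarrow(2)$, I would start from Theorem~\ref{teo4}: since $a$ is strongly $\pi$-regular, there is a unique $p^2 = p \in comm^2(a)$ with $a - ap \in Nil(I)$, together with the associated $b \in comm^2(a)$ satisfying $ab^2 = b$, $ab = ba = p$, and $a^2b - a \in Nil(I)$. The natural candidate is to set $s = ap = a^2b$ and $n = a - ap = a - s$. Then $sn = ap(a - ap) = a^2p - a^2p = 0$ and likewise $ns = 0$ since $p$ is central on $comm(a)$ and $a \in comm(a)$; and $n = a - ap \in Nil(I)$ by hypothesis. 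It remains to check that $s = ap$ is strongly regular: using $s = a^2b$ and Lemma~\ref{strregular}, one wants a $c \in comm^2(s)$ with $s = s^2 c$. Here $b$ itself should work, or $bp$; one verifies $s^2 b = a^2 b \cdot a^2 b \cdot \dots$ collapses to $a^2 b = s$ using $ab^2 = b$ and $ab = p = p^2$, and that $b \in comm^2(s)$ follows because any element commuting with $s = ap$ can be related back to commuting with $a$ (using that $p$ and $b$ lie in $comm^2(a)$). So $s$ is strongly regular, $n$ is nilpotent, they are orthogonal, and $a = s + n$.

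For the converse $(2)\Rightarrow(1)$, suppose $a = s + n$ with $s$ strongly regular, $n$ nilpotent, $sn = ns = 0$. By Lemma~\ref{strregular} there is $c \in comm^2(s)$ with $s = s^2 c$; set $e = sc$, an idempotent with $e \in comm^2(s)$ and $se = es = s$. The key observations are that $n e = e n = 0$ — since $ne = n s c = 0 \cdot c = 0$ using $ns = 0$, and $en = sc n$; one needs $cn = nc$, which follows because $n \in comm(s)$ (indeed $ns = sn = 0$) forces $c \in comm^2(s)$ to commute with $n$ — and that $e \in comm(a)$, since $ae = (s+n)e = se + ne = s = es = e(s+n) = ea$. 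Then $a e = s$ and $a - ae = n \in Nil(I)$. To apply Theorem~\ref{teo4}(3) we want $a + e \in Q(I)$ with $e^2 = e \in comm(a)$ and $a - ae \in Nil(I)$; the first two are in hand, and $a - ae = n$ is nilpotent. For $a + e \in Q(I)$: on the corner $eIe$ the element $a + e = s + e$ acts like $s + e$ where $s$ is strongly regular hence $s = se$ restricted there is a unit in $eIe$ (strongly regular $\Rightarrow$ unit-regular in the corner), while on the complementary part $a + e$ acts like $n$, which is nilpotent hence quasiregular. Explicitly I would produce the $\ast$-inverse by writing down $r$ analogous to the formula in the proof of Theorem~\ref{teo4}: something like $r = (c - e) + (\text{a nilpotent correction from } n)$, and check $(a+e)\ast r = 0 = r \ast (a+e)$ by direct computation using $en = ne = 0$, $ec = ce$, $es = s$. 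Then Theorem~\ref{teo4} gives that $a$ is strongly $\pi$-regular.

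The main obstacle I expect is the verification that the idempotent $e$ (or $p$) lies in the right commutant — in particular showing $c \in comm^2(s)$ actually commutes with $n$, and that $e \in comm(a)$ rather than merely $comm(s)$. This is where the orthogonality $sn = ns = 0$ must be used carefully: it places $n$ inside $comm(s)$, so that any $c \in comm^2(s)$ commutes with $n$, and then $e = sc$ commutes with both $s$ and $n$, hence with $a = s + n$. A second, more computational obstacle is exhibiting the explicit quasi-inverse $r$ of $a + e$ in $Q(I)$ without an identity; the trick used throughout the paper is that once we have a candidate from the corner decomposition it is a finite $\ast$-product manipulation, exactly parallel to the displayed computation of $q \ast r$ in the proof of Theorem~\ref{teo4}, so no genuinely new idea is needed there. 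Once these commutation facts are nailed down, both directions reduce cleanly to Theorem~\ref{teo4} and Lemma~\ref{strregular}.
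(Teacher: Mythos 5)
Your proposal is correct in substance but routes both directions through Theorem~\ref{teo4}, whereas the paper works more directly from the definitions. For $(1)\Rightarrow(2)$ you and the paper produce essentially the same core--nilpotent decomposition: the paper invokes Drazin's pseudo-inverse ($ac=ca$, $a^m=a^{m+1}c$, $c=c^2a$) and sets $s=aca$, $n=a-aca$, while you set $s=a^2b$, $n=a-a^2b$ from the generalized inverse of Theorem~\ref{teo4}; both work. One small caution: your claim that $b\in comm^2(s)$ ``because any element commuting with $s=ap$ can be related back to commuting with $a$'' is not obviously true and is also unnecessary --- to see that $s$ is strongly regular you only need the definition, i.e.\ $sbs=s$ and $sb=bs$ $(=p)$, which is a one-line computation. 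For $(2)\Rightarrow(1)$ your argument is noticeably heavier than the paper's: you build the idempotent $e=sc$, verify $e\in comm(a)$ and $a-ae=n\in Nil(I)$, and then must exhibit an explicit $\ast$-inverse of $a+e$ to get $a+e\in Q(I)$ before quoting Theorem~\ref{teo4}(3); that quasi-inverse (roughly $ece+e-n-n^2-\cdots-n^{k-1}$) is real work that you only sketch, and in a general ring without identity it is the one step you cannot wave at. The paper avoids all of this: since $sn=ns=0$ and $n^k=0$, one gets $a^k=(s+n)^k=s^k$ and $a^{k+1}=s^{k+1}$, hence $a^k=s^{k-1}s^2x=s^{k+1}x=a^{k+1}x$ with $ax=xa$ (because $as=sa$ and $x\in comm^2(s)$), which is the definition of strong $\pi$-regularity outright. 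Your commutation analysis ($n\in comm(s)$ forces $c\in comm^2(s)$ to commute with $n$, hence $e\in comm(a)$) is correct and is the right way to handle the double commutant, but the paper's shortcut makes the idempotent and the $Q(I)$ computation unnecessary here.
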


\begin{proof} $(1)\Rightarrow(2)$ Suppose that $a\in I$ is
strongly $\pi$-regular. It is well known that $a$ is strongly
$\pi$-regular if and only if $a$ is pseudo-invertible; that is,
there exist $c\in I$ and $m\in \Bbb N$ such that $ac=ca$,
$a^m=a^{m+1}c$ and $c=c^2a$ (see \cite[Theorem 4]{D}). Set $s=aca$
and $n=a-aca$. Then $sn=ns=aca(a-aca)=0$ because $ac=ca$ and $ac$
is idempotent in $I$. It is easy to checked that $s=s^2c$ and so
$s$ is strongly regular in $I$. Write $ca=ac=e=e^2\in I$. Hence
$(a-aca)^m=(a-ae)^m=a^m-a^me=a^m-a^mac=a^m-a^{m+1}c=0$. Thus $n\in
I$ is nilpotent and so holds $(2)$.

$(2)\Rightarrow(1)$ Assume that $a=s+n$ where $s$ is strongly
regular, $n$ is nilpotent and $sn=ns=0$. Since $n$ is nilpotent,
there exists $k\in \Bbb N$ such that $n^k=0$. As $s$ is strongly
regular, there exists $x\in I$ such that $s=s^2x$ and $x\in
comm^2(s)$ by Lemma~\ref{strregular}. Then it is easy to see that
$a^k=(s+n)^k=s^k$ and $a^{k+1}=(s+n)^k=s^{k+1}$ because $sn=ns=0$.
This gives that $a^k=s^k=s^{k-1}s=s^{k-1}s^2x=s^{k+1}x=a^{k+1}x$.
Further, as $as=sa$ and $x\in comm^2(s)$, we have $ax=xa$. Hence
$a$ is strongly $\pi$-regular in $I$.
\end{proof}

The following result is well known for a ring (see \cite{A}).

\begin{cor} If an element $a$ in a general ring $I$ is strongly
$\pi$-regular, then $a^k$ is strongly regular for some $k\in \Bbb
N$.
\end{cor}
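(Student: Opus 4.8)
The plan is to derive this corollary directly from Theorem~\ref{chen}. Suppose $a$ in a general ring $I$ is strongly $\pi$-regular. By Theorem~\ref{chen}, we may write $a = s + n$ where $s$ is strongly regular, $n$ is nilpotent, and $sn = ns = 0$. Since $n$ is nilpotent, there exists $k \in \Bbb N$ with $n^k = 0$. The key computational observation — already used inside the proof of Theorem~\ref{chen} — is that because $s$ and $n$ are orthogonal ($sn = ns = 0$), the binomial-type expansion collapses and $a^k = (s+n)^k = s^k$.

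Thus it suffices to check that $s^k$ is strongly regular whenever $s$ is. Since $s$ is strongly regular, Lemma~\ref{strregular} gives $x \in comm^2(s)$ with $s = s^2 x$. I would then verify that $x^k \in comm^2(s^k)$ and that $s^k = (s^k)^2 x^k$; the latter follows by repeatedly substituting $s = s^2 x$ (equivalently, $s = s x s$ combined with $xs = sx$), so $s^k = s^{2k} x^k$. For the double-commutant condition, if $y$ commutes with $s^k$, one uses that $x$ lies in the double commutant of $s$ together with the fact that powers of $s$ commute with $x$, so $x^k$ commutes with $y$; since $s^k$ itself clearly commutes with $x^k$, we get $x^k \in comm(s^k)$, and a short argument promotes this to $comm^2(s^k)$ — or, more simply, one invokes Lemma~\ref{strregular} in the reverse direction once $s^k = (s^k)^2 x^k$ with $x^k \in comm(s^k)$ is established, noting that strong regularity of $s^k$ only requires an element of $comm(s^k)$.

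Putting these together: $a^k = s^k$ is strongly regular, which is exactly the claim. The main obstacle — and it is a mild one — is confirming the double-commutant bookkeeping for $x^k$ relative to $s^k$; everything else is an orthogonality-driven collapse of a product expansion. Since Lemma~\ref{strregular} already packages the equivalence between strong regularity and the existence of a double-commuting quasi-inverse, I expect the write-up to be just a few lines invoking Theorem~\ref{chen}, the identity $a^k = s^k$, and Lemma~\ref{strregular}.
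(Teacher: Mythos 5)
Your proposal is correct and is exactly the argument the paper intends: the corollary is stated as an immediate consequence of Theorem~\ref{chen}, using the orthogonality $sn=ns=0$ to get $a^k=(s+n)^k=s^k$ and the fact that a power of a strongly regular element is strongly regular (which the paper records separately in Remark~\ref{rem1}(2)). Your observation that only $x^k\in comm(s^k)$ is needed (since the definition of strong regularity requires merely a commuting quasi-inverse) correctly disposes of the only potential bookkeeping issue.
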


 A new characterization of a quasipolar element in general ring is given as
 follows.

\begin{thm}\label{new} For an element $a$ in a general ring
$I$, the following are equivalent.

\begin{enumerate}
    \item $a$ is quasipolar in $I$.
    \item $a\in I$ can be written in the form $a=s+q$ where $s$ is strongly regular, $s\in comm^2(a)$, $q\in QN(I)$ and $sq=qs=0$.
\end{enumerate}
\end{thm}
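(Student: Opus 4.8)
The plan is to mirror the decomposition result for strongly $\pi$-regular elements (Theorem~\ref{chen}), replacing ``nilpotent'' by ``quasinilpotent'' throughout, and to use Theorem~\ref{teo0} as the bridge between quasipolarity and the generalized Drazin inverse.

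For the implication $(1)\Rightarrow(2)$, I would start from Theorem~\ref{teo0}: if $a$ is quasipolar there is a unique $b\in comm^2(a)$ with $ab^2=b$ and $a^2b-a\in QN(I)$. Set $s=a^2b$ and $q=a-a^2b$. Writing $p=ab$ (which, as in the proof of Theorem~\ref{teo0}, is an idempotent in $comm^2(a)$), one has $s=ap$ and $q=a-ap=a(1-p)$ in the ``formal'' sense, so $sq=ap\cdot a(1-p)=a^2 p(1-p)=0$ and similarly $qs=0$ using that $p$ commutes with $a$. Then $q=-(a^2b-a)\in QN(I)$ by Corollary~\ref{cor0}'s underlying symmetry (or directly, since $QN(I)$ is closed under negation as shown in the lemma preceding Theorem~\ref{teo0}). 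For $s$ strongly regular: $s=a^2b$ and I would check $s=s^2b$ by the same bracket-algebra used in Theorem~\ref{teo0} (indeed $s^2b=a^2b a^2 b b = a^2 (ab)(ab^2)\cdot b$-type manipulation collapses via $ab^2=b$ and $p^2=p$), so by Lemma~\ref{strregular} it suffices to also have $b\in comm^2(s)$, which follows because $b\in comm^2(a)$ and $s$ is a polynomial in $a$. Finally $s=a^2b\in comm^2(a)$ for the same reason. This gives the decomposition.

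For $(2)\Rightarrow(1)$, assume $a=s+q$ with $s$ strongly regular, $s\in comm^2(a)$, $q\in QN(I)$, $sq=qs=0$. By Lemma~\ref{strregular} pick $x\in comm^2(s)$ with $s=s^2x$; as in Theorem~\ref{chen}, since $s\in comm^2(a)$ we get $x\in comm^2(a)$, and then $e:=sx=xs$ is an idempotent lying in $comm^2(a)$. Set $p=e$. I would verify $a+p\in Q(I)$ and $a-ap\in QN(I)$: since $sq=qs=0$ and $s=s^2x$, one computes $ap=(s+q)sx=s^2x+qsx=s$, hence $a-ap=q\in QN(I)$, and $a+p=s+q+e$ where $s+e\in Q(I)$ follows from $s$ being strongly regular (so $s+e$ is a unit ``locally'', i.e.\ lies in $Q(I)$ by the standard argument that $s+e$ has inverse $x+(1-e)$ in the corner decomposition) — the cleanest route is to invoke Theorem~\ref{teo0} directly by exhibiting the generalized Drazin inverse $b=x e=ex$ of $a$: check $ab^2=b$, $b\in comm^2(a)$, $a^2b-a=-q\in QN(I)$.

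The main obstacle is the bookkeeping in the identity-free setting: without a $1$, all the ``$1-p$'' and ``unit'' statements must be rephrased through $Q(I)$ and the $\ast$-operation, and one has to be careful that the products $sq=qs=0$ genuinely force the cross terms in $(s+q)^2$, $a^2b$, etc., to vanish — the $\ast$-algebra lemmas (Lemmas~\ref{commute} and~\ref{qregular}) are the tools for this, exactly as in Theorem~\ref{teo0}. A secondary subtlety is confirming $b\in comm^2(a)$ rather than merely $b\in comm(a)$, which matters because the statement asserts $s\in comm^2(a)$; this is handled by the now-routine argument (appearing twice already in Theorems~\ref{teo0} and~\ref{teo4}) that elements arising from $\ast$-inverses of things commuting with $a$ again lie in $comm^2(a)$. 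I expect the proof to be short, citing Theorem~\ref{teo0}, Lemma~\ref{strregular}, and Corollary~\ref{cor0}, with the only real computation being $s=s^2b$ (or the verification that $b=ex$ is the generalized Drazin inverse).
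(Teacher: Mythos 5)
Your proposal is correct and takes essentially the same approach as the paper: for $(1)\Rightarrow(2)$ the paper likewise invokes Theorem~\ref{teo0}, sets $s=a^2b$ and $q=a-a^2b$, and checks $s=s^2b$, $sq=qs=0$, $s\in comm^2(a)$. The only (minor) difference is in $(2)\Rightarrow(1)$: the paper verifies Definition~\ref{defn1} directly with the idempotent $sy$, computing $(s+sy)\ast(y^2s+sy)=0$ and composing with a quasi-inverse of $q$ to conclude $a+sy\in Q(I)$, whereas you route through Theorem~\ref{teo0} by exhibiting $b=xe$ as the generalized Drazin inverse --- both are valid, and your version trades the explicit $\ast$-computation for the checks $ab^2=b$, $a^2b-a=-q\in QN(I)$, $b\in comm^2(a)$, which all go through as you indicate.
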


\begin{proof} $(1)\Rightarrow(2)$ Assume that $a\in I$ is
quasipolar. By Theorem~\ref{teo0}, there exists $b\in comm^2(a)$
such that $ab^2=b$ and $a^2b-a\in QN(I)$. Set $s=a^2b$ and
$q=a-a^2b$. Further we have $s\in comm^2(a)$ and
$sq=qs=a^2b(a-a^2b)=0$ because $ab=ba$ and $ab$ is idempotent in
$I$. It is easy to see that $s=s^2b$ and so $s\in I$ is strongly
regular.

$(2)\Rightarrow(1)$ Suppose that $a=s+q$ where $s$ is strongly
regular, $s\in comm^2(a)$, $q\in QN(I)$ and $sq=qs=0$. Since $s$
is strongly regular, there exists $y\in comm^2(s)$ such that
$s=s^2y$ by Lemma~\ref{strregular}. Then we have $sy=ys$ is an
idempotent and $yq=qy$. Hence $a+sy=s+sy+q=(s+sy)\ast q=q\ast
(s+sy)$ and $(s+sy)\ast (y^2s+sy)=(y^2s+sy)\ast (s+sy)=0$. This
implies that $(a+sy)\ast (y^2s+sy)=(y^2s+sy)\ast (a+sy)=(s+sy)\ast
q\ast (y^2s+sy)=(y^2s+sy)\ast (s+sy)\ast q=q$. As $q\in Q(I)$, it
can be checked that $a+sy\in Q(I)$. Further
$a-asy=s+q-s^2y-qsy=q\in QN(I)$ and $sy\in comm^2(a)$. Thus $a\in
I$ is quasipolar. So holds $(1)$.
\end{proof}

The following result is a direct consequence of Theorem~\ref{new}.

\begin{cor}\label{cor3} Let $R$ be a ring and let $a\in R$. Then the following are equivalent.

\begin{enumerate}
    \item $a$ is quasipolar.
    \item $a=s+q$ where $s$ is strongly regular, $s\in comm^2(a)$, $q\in R^{qnil}$ and $sq=qs=0$.
\end{enumerate}
\end{cor}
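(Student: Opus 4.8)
The plan is to obtain Corollary~\ref{cor3} as the special case of Theorem~\ref{new} in which the general ring $I$ is replaced by a ring $R$ with identity. The only thing to check is that, once we specialize, the notion of quasinilpotence appearing in Theorem~\ref{new} coincides with the classical one, i.e.\ that $QN(R)=R^{qnil}$. But this was already recorded right after the definition of $QN(I)$: for a ring with identity one has $Q(R)=\{q\in R\mid 1-q\in U(R)\}$ and consequently $QN(R)=R^{qnil}$. So the argument is essentially a one-line translation.

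Concretely, I would argue as follows. Suppose $a\in R$ is quasipolar. Since $R$ is in particular a general ring, Theorem~\ref{new} applies and yields a decomposition $a=s+q$ with $s$ strongly regular, $s\in comm^2(a)$, $q\in QN(R)$ and $sq=qs=0$. Because $QN(R)=R^{qnil}$, this is exactly statement $(2)$. Conversely, if $a=s+q$ with $s$ strongly regular, $s\in comm^2(a)$, $q\in R^{qnil}$ and $sq=qs=0$, then again $q\in QN(R)$, so Theorem~\ref{new}$(2)$ holds, and hence $a$ is quasipolar by Theorem~\ref{new}. This establishes the equivalence.

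There is no real obstacle here; the content is entirely in Theorem~\ref{new}, and the corollary is just the instantiation at a unital ring together with the identification $QN(R)=R^{qnil}$. The only point worth a moment's care is making sure the identity $QN(R)=R^{qnil}$ is genuinely available and that $comm^2$ means the same thing in $R$ viewed as a general ring as in $R$ viewed as a ring, which it does since $comm$ and $comm^2$ are defined purely in terms of multiplication. Thus the proof is: ``Immediate from Theorem~\ref{new}, since $QN(R)=R^{qnil}$ when $R$ has an identity.''

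\begin{proof}
This is a direct specialization of Theorem~\ref{new}. Indeed, $R$ is in particular a general ring, and since $R$ has an identity we have $Q(R)=\{q\in R\mid 1-q\in U(R)\}$ and hence $QN(R)=R^{qnil}$, as noted after the definition of $QN(I)$. Thus condition $(2)$ of the present statement coincides with condition $(2)$ of Theorem~\ref{new} applied to $R$, and the equivalence follows.
\end{proof}
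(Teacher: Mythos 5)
Your proposal is correct and matches the paper exactly: the paper states this corollary as a direct consequence of Theorem~\ref{new}, relying on the same identification $QN(R)=R^{qnil}$ for a ring with identity. Nothing further is needed.
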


\begin{prop}\label{cor1} A general ring $I$ is strongly regular if and only if $I$ is quasipolar and $QN(I)=0$.
\end{prop}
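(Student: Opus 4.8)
The plan is to prove both implications directly, leveraging Theorem~\ref{new} (or Corollary~\ref{cor3} in the unital case, but Theorem~\ref{new} is what applies to general rings) together with the elementary observation that a strongly regular element is automatically quasipolar. For the forward direction, suppose $I$ is strongly regular. Every $a \in I$ satisfies $a = a^2 b$ with $b \in comm(a)$; by Lemma~\ref{strregular} we may take $b \in comm^2(a)$. Then $a = a + 0$ is of the form $s + q$ with $s = a$ strongly regular, $s \in comm^2(a)$, $q = 0 \in QN(I)$, and $sq = qs = 0$, so $a$ is quasipolar by Theorem~\ref{new}; hence $I$ is quasipolar. It remains to show $QN(I) = 0$: if $q \in QN(I)$, then since $q \in comm(q)$ we get $q^2 \in Q(I)$, but also $q$ is strongly regular, so $q = q^2 x$ with $x \in comm(q)$; I would combine these to force $q = 0$ (for instance, $q = q^2 x$ gives $q = e q$ where $e = qx = xq$ is an idempotent, and $q^2 \in Q(I)$ together with the strong regularity of $q^2$ should collapse $e$, hence $q$, to zero — the cleanest route is to note that a strongly regular element lying in $Q(I)$ must be $0$, since if $s = s^2 b$ with $b \in comm(s)$ and $s \ast t = t \ast s = 0$ then $s$ is both a ``quasiregular'' and a von Neumann regular element commuting with its pseudo-inverse data, which forces $s=0$).

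For the converse, suppose $I$ is quasipolar with $QN(I) = 0$. Fix $a \in I$. By Theorem~\ref{new}, write $a = s + q$ with $s$ strongly regular, $s \in comm^2(a)$, $q \in QN(I)$, and $sq = qs = 0$. Since $QN(I) = 0$ we have $q = 0$, so $a = s$ is strongly regular. As $a$ was arbitrary, $I$ is strongly regular.

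The main obstacle I anticipate is the verification that $QN(I) = 0$ in the forward direction — more precisely, showing that a strongly regular element of $I$ which also lies in $QN(I)$ must vanish. The argument needs to be carried out carefully in the non-unital setting, where one cannot simply write ``$1 - q$ is a unit.'' The key identities are: $q \in QN(I)$ and $q \in comm(q)$ give $q^2 \in Q(I)$, so there is $b \in I$ with $q^2 \ast b = 0 = b \ast q^2$, i.e. $q^2 + b = q^2 b = b q^2$; and strong regularity gives $e := qx = xq$ idempotent with $q = eq = qe$ and $x \in comm(q)$. I would then show $e \in Q(I)$ (since $e = q^2 x^2 = q^2 x^2$ lies in the left/right ideal structure controlled by $q^2$, and products of elements of $Q(I)$ with commuting elements stay in $Q(I)$ via Lemma~\ref{commute}-type reasoning), and an idempotent in $Q(I)$ is $0$ because if $e \ast f = 0 = f \ast e$ then $e = e \ast 0 = e \ast (f \ast e) = (e \ast f) \ast e = 0$. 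Wait — more directly: if $e$ is idempotent and $e \in Q(I)$, pick $f$ with $e \ast f = 0$; multiplying $e + f - ef = 0$ on the left by $e$ gives $e + ef - ef = 0$, so $e = 0$. Thus $q = eq = 0$. Everything else reduces to invoking Theorem~\ref{new} and is routine.
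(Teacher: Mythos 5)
The converse direction and the quasipolarity half of the forward direction are fine and essentially match the paper (the paper gets quasipolarity via strongly regular $\Rightarrow$ strongly $\pi$-regular $\Rightarrow$ quasipolar using Theorem~\ref{teo4}, while you invoke Theorem~\ref{new} with the decomposition $a=a+0$; both work, since $a\in comm^2(a)$ always). The problem is the step you yourself flag as the main obstacle, namely $QN(I)=0$, where both routes you sketch are defective. First, the claim that ``a strongly regular element lying in $Q(I)$ must be $0$'' is false: in $R=\Bbb Q$ the element $2$ is strongly regular (it is a unit) and $1-2\in U(\Bbb Q)$, so $2\in Q(\Bbb Q)$, yet $2\neq 0$. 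Second, the assertion that ``products of elements of $Q(I)$ with commuting elements stay in $Q(I)$'' is also false ($Q(I)$ is not an ideal and is not closed under such products; again $2\in Q(\Bbb Q)\cap Q(\Bbb Z)$ but $4\notin Q(\Bbb Z)$), so your derivation of $e\in Q(I)$ from $q^2\in Q(I)$ does not go through.

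The repair is immediate and is exactly what the paper does: you never need $q^2\in Q(I)$ at all. Since $q\in QN(I)$ and the strong-regularity witness $x$ lies in $comm(q)$, the \emph{definition} of $QN(I)$ applied to $x$ (rather than to $q$) gives $e=qx\in Q(I)$ directly. As you correctly computed, $e$ is an idempotent, and an idempotent in $Q(I)$ is $0$ (multiply $e+f-ef=0$ on the left by $e$). Hence $q=eq=0$. With that substitution your argument becomes correct and coincides with the paper's proof; as written, however, the forward direction rests on two false intermediate claims.
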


\begin{proof} Assume that $I$ is strongly regular. Then $I$ is strongly $\pi$-regular
and so $I$ is quasipolar by Theorem~\ref{teo4}. Let $a\in QN(I)$.
By hypothesis, $a = aba$ and $b\in comm(a)$ for some $b\in I$.
Since $ab=ba$, we have $ab\in Q(I)$. This implies that $ab=0$ and
so $a=0$. Hence $QN(I)=0$. Conversely, let $a\in I$. Since
$QN(I)=0$, $a$ is strongly regular by Theorem~\ref{new}.
\end{proof}

The following result follows from Proposition~\ref{cor1}.

\begin{cor}\cite[Theorem 2.4]{JCJC} Let $R$ be a ring. Then $R$ is strongly regular if and
only if $R$ is quasipolar and $R^{qnil}=0$.
\end{cor}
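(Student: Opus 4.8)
The plan is to obtain this as nothing more than the identity-ring specialization of Proposition~\ref{cor1}. Recall from the discussion at the start of Section~2 that whenever $R$ has an identity one has $Q(R)=\{q\in R\mid 1-q\in U(R)\}$ and, crucially for us, $QN(R)=R^{qnil}$. Hence the hypothesis ``$QN(I)=0$'' appearing in Proposition~\ref{cor1} becomes, in the case $I=R$, exactly the hypothesis ``$R^{qnil}=0$''.

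Next I would observe that the two notions of quasipolarity involved agree. By Lemma~\ref{lem1}, a ring $R$ is quasipolar in the classical sense (every $a\in R$ admits an idempotent $p\in comm^2(a)$ with $a+p\in U(R)$ and $ap\in R^{qnil}$) precisely when it is quasipolar as a general ring in the sense of Definition~\ref{defn1}. So ``quasipolar'' means the same thing on both sides of the equivalence to be proved. Similarly, ``strongly regular'' is literally the same condition for a ring as for a general ring, so nothing needs to be translated on that side either.

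With these identifications in hand the corollary is immediate: apply Proposition~\ref{cor1} to the general ring $I=R$ to get that $R$ is strongly regular if and only if $R$ is quasipolar and $QN(R)=0$, then rewrite $QN(R)$ as $R^{qnil}$.

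I do not anticipate any real obstacle; all of the mathematical content already resides in Proposition~\ref{cor1} (and, upstream of that, in Theorem~\ref{teo4} and Theorem~\ref{new}). This corollary is only the dictionary entry converting the general-ring statement back into the familiar form for rings with identity, thereby recovering \cite[Theorem 2.4]{JCJC}.
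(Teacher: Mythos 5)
Your proposal is correct and matches the paper exactly: the paper states this corollary as an immediate consequence of Proposition~\ref{cor1}, relying on the identifications $QN(R)=R^{qnil}$ and the equivalence of the two notions of quasipolarity from Lemma~\ref{lem1}, just as you do. Nothing further is needed.
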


\begin{rem}\label{rem1}\rm{ $(1)$ In Proposition~\ref{cor1}, it was proved that if $a\in QN(I)$ and $a$ is
strongly regular, then $a=0$.

$(2)$ If $a$ is strongly regular, then $a^k$ is strongly regular
for any $k\in \Bbb N$.

$(3)$ If $a\in QN(I)$ and $a^k$ is strongly regular for some $k\in
\Bbb N$, then $a\in Nil(I)$.}
\end{rem}

\begin{prop}\label{cor2} A general ring $I$ is strongly
$\pi$-regular if and only if $I$ is quasipolar and $QN(I)\subseteq
Nil(I)$.
\end{prop}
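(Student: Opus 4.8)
The plan is to deduce both implications directly from the structural characterizations already established, namely Theorem~\ref{teo0}, Theorem~\ref{teo4}, Theorem~\ref{new}, Theorem~\ref{chen} together with its corollary, and Remark~\ref{rem1}.

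For the forward implication, suppose $I$ is strongly $\pi$-regular. First I would observe that $I$ is quasipolar: by Theorem~\ref{teo4} each $a\in I$ admits $b\in comm^2(a)$ with $ab^2=b$ and $a^2b-a\in Nil(I)\subseteq QN(I)$, so $a$ is quasipolar by Theorem~\ref{teo0}. Next, to see $QN(I)\subseteq Nil(I)$, take $a\in QN(I)$. Since $a$ is strongly $\pi$-regular, the corollary following Theorem~\ref{chen} gives that $a^k$ is strongly regular for some $k\in\Bbb N$; then Remark~\ref{rem1}(3) (which packages Remark~\ref{rem1}(1) together with the fact that $a\in QN(I)$ forces $a^k\in QN(I)$) yields $a\in Nil(I)$. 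Hence $QN(I)\subseteq Nil(I)$.

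For the converse, assume $I$ is quasipolar with $QN(I)\subseteq Nil(I)$, and let $a\in I$. By Theorem~\ref{new} we may write $a=s+q$ with $s$ strongly regular, $s\in comm^2(a)$, $q\in QN(I)$ and $sq=qs=0$. The hypothesis $QN(I)\subseteq Nil(I)$ upgrades $q$ to a nilpotent element, so $a=s+q$ exhibits $a$ as the sum of a strongly regular element and a nilpotent element that annihilate one another; Theorem~\ref{chen} then gives that $a$ is strongly $\pi$-regular. Since $a$ was arbitrary, $I$ is strongly $\pi$-regular.

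There is no substantial obstacle here; the only point that requires a little care is the step $QN(I)\subseteq Nil(I)$ in the forward direction, where one must know that $a\in QN(I)$ implies $a^k\in QN(I)$ before Remark~\ref{rem1} can be applied to $a^k$ — but this is precisely what is subsumed in Remark~\ref{rem1}(3). Everything else is a direct appeal to the previously proved results.
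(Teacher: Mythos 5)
Your proposal is correct and follows essentially the same route as the paper: quasipolarity of $I$ via Theorem~\ref{teo4} (the paper cites it with $Nil(I)\subseteq QN(I)$ rather than passing explicitly through Theorem~\ref{teo0}, a cosmetic difference), the inclusion $QN(I)\subseteq Nil(I)$ via the decomposition of Theorem~\ref{chen} giving $a^k$ strongly regular and then Remark~\ref{rem1}(3), and the converse via Theorem~\ref{new} combined with Theorem~\ref{chen}. No gaps.
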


\begin{proof} Assume that $I$ is strongly $\pi$-regular. Then, by Theorem~\ref{teo4}, $I$ is quasipolar
because $Nil(I)\subseteq QN(I)$. Let $a\in QN(I)$. As $I$ is
strongly $\pi$-regular, by Theorem~\ref{chen}, $a=s+n$ where $s$
is strongly regular, $n$ is nilpotent and $sn=ns=0$. Since $n$ is
nilpotent, there exists $k\in \Bbb N$ such that $n^k=0$. Hence we
have $a^k=s^k$. As $s^k$ is strongly regular and $a\in QN(I)$, by
Remark~\ref{rem1}, we see that $a\in Nil(I)$. Thus $QN(I)\subseteq
Nil(I)$. Conversely, suppose that $I$ is quasipolar and
$QN(I)\subseteq Nil(I)$. In view of Theorem~\ref{new} and
Theorem~\ref{chen}, $I$ is strongly $\pi$-regular.
\end{proof}

The following result is a direct consequence of
Proposition~\ref{cor2}.

\begin{cor}\cite[Theorem 2.6]{JCJC} Let $R$ be a ring. Then $R$ is strongly $\pi$-regular if and
only if $R$ is quasipolar and $R^{qnil}\subseteq Nil(R)$.
\end{cor}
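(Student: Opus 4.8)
The plan is to deduce this corollary directly from Proposition~\ref{cor2} by taking $I=R$ and observing that when $R$ has an identity all the general-ring notions specialize to the familiar ones. First I would recall from the discussion following the definition of $QN(I)$ that $QN(R)=R^{qnil}$ whenever $R$ is a ring with identity, and likewise $Q(R)=\{q\in R\mid 1-q\in U(R)\}$. With these identifications in place, the hypothesis ``$R$ is quasipolar and $R^{qnil}\subseteq Nil(R)$'' is literally the same as ``$R$ is quasipolar and $QN(R)\subseteq Nil(R)$''.

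Next I would invoke Lemma~\ref{lem1}, which already records that a ring $R$ (with identity) is quasipolar in the classical sense exactly when it is quasipolar as a general ring in the sense of Definition~\ref{defn1}; so there is no discrepancy between the two meanings of ``quasipolar'' for $R$. Then Proposition~\ref{cor2}, applied to the general ring $I=R$, gives the equivalence: $R$ is strongly $\pi$-regular as a general ring $\iff$ $R$ is quasipolar and $QN(R)\subseteq Nil(R)$. Finally I would note that for a ring with identity the general-ring definition of strong $\pi$-regularity (existence of $n\in\mathbb N$ and $x\in comm(a)$ with $a^n=a^{n+1}x$) coincides with the usual one, so the statement translates back verbatim into the language of the corollary.

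The only genuine content here is the bookkeeping of translating between the ``general ring'' vocabulary and the ordinary ring vocabulary; there is no new calculation. The one place to be slightly careful is confirming $QN(R)=R^{qnil}$: if $q\in QN(R)$ then for every $x\in comm(q)$ we have $qx\in Q(R)$, i.e.\ $1-qx\in U(R)$, which is exactly the condition $q\in R^{qnil}$ (after replacing $x$ by $-x$, noting $comm(q)=comm(-q)$ and that $comm(q)$ is closed under negation); conversely the same computation runs backwards. Once this identification and the matching of the two ``quasipolar'' notions via Lemma~\ref{lem1} are in hand, the corollary is immediate, and I would simply write ``This follows from Proposition~\ref{cor2} by taking $I=R$, since $QN(R)=R^{qnil}$ and, by Lemma~\ref{lem1}, quasipolarity of $R$ as a ring agrees with quasipolarity of $R$ as a general ring.''
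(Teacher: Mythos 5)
Your proposal is correct and matches the paper exactly: the paper likewise obtains this corollary as an immediate consequence of Proposition~\ref{cor2} with $I=R$, relying on the identifications $QN(R)=R^{qnil}$ and the agreement of the two quasipolarity notions recorded in Lemma~\ref{lem1}. Your additional verification of $QN(R)=R^{qnil}$ is just the bookkeeping the paper leaves implicit.
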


An element $a$ of a ring $R$ is called \emph{semiregular} if there
exists $b\in R$ with $bab = b$ and $a-aba\in J(R)$. A ring is a
\emph{semiregular ring} if each of its elements is semiregular
(\cite[Proposition 2.2]{N0}).

We give a simple proof of the \cite[Theorem 3.2]{YC}.

\begin{thm}\label{semi}  Let $R$ be a ring. If $R$ is quasipolar and $R^{qnil}\subseteq J(R)$, then
$R$ is semiregular. The converse holds if $R$ is abelian.
\end{thm}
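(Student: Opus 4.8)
\textbf{Proof proposal for Theorem~\ref{semi}.}

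The plan is to prove the forward direction using the characterization of quasipolarity from Theorem~\ref{new} together with the hypothesis $R^{qnil}\subseteq J(R)$, and then handle the (abelian) converse by a short direct argument. For the forward direction, let $a\in R$. By Theorem~\ref{new} (or Corollary~\ref{cor3}), write $a=s+q$ where $s$ is strongly regular, $s\in comm^2(a)$, $q\in R^{qnil}$ and $sq=qs=0$. Since $s$ is strongly regular, there is $y\in comm^2(s)$ with $s=s^2y=sy s$ and $sy=ys$ idempotent; set $e=sy$. The idea is that $e$ plays the role of the ``support idempotent'' of $a$, and on the complement $1-e$ the element $a$ behaves like $q$, which lies in $J(R)$ by hypothesis. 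Concretely, I would propose the candidate $b=y$ (or a small modification thereof) and check that $bab=b$ and $a-aba\in J(R)$. First compute $ab=(s+q)y$; using $qs=sq=0$ one expects $qy=yq$ (this follows from Lemma~\ref{commute}-type reasoning since $y\in comm^2(s)$ and $s\in comm(q)$, hence $q\in comm(s)$ forces... actually one argues $y$ commutes with everything commuting with $s$, and $q$ commutes with $s$ since $qs=sq$, so $qy=yq$). Then $ay=sy+qy=e+qy$, and $aya=(e+qy)(s+q)=es+eq+qys+qyq$. Since $e=sy$ and $sq=0$ we get $es=s$, $eq=syq=yq s\cdot\!$... here I need $eq=syq$; and $sy s=s$ forces the $s$-part to collapse to $s$, while the remaining terms involving $q$ are of the form $q\cdot(\text{something})$ and hence lie in $J(R)$ because $q\in R^{qnil}\subseteq J(R)$ and $J(R)$ is an ideal. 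So $a-aya\in J(R)$. For $bab=b$: $yay=y(e+qy)=ye+yqy=y\cdot sy+yqy$; since $y\in comm^2(s)$ and $s=s^2y$ one has $ys y=y$ (indeed $y=y^2s$ from strong regularity of $s$ applied symmetrically, so $ysy=y^2sy=y\cdot ys=y\cdot sy$, and one checks $y^2 s=y$), and $yqy\in J(R)$; so $yay=y+(\text{element of }J(R))$, which is not quite $bab=b$ on the nose.

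Because of this last discrepancy, the cleaner route is probably to pass to the corner rings $eRe$ and $(1-e)R(1-e)$. Here is the refined plan: with $e=sy\in comm^2(a)$ an idempotent, decompose $R$ via $e$. In the corner $eRe$, the element $ea=eas=s$ (using $sq=0$, $e s=s$, $es=s$) is strongly regular, in fact $s$ is a unit of $eRe$ with inverse $y$ (since $sy=ys=e$ and $e$ is the identity of $eRe$); a unit is trivially semiregular in that corner. In the corner $(1-e)R(1-e)$, the element $(1-e)a(1-e)$: since $ea=as=s+0$ wait — I need $ea=e a$; we have $ea=sy a=sy(s+q)=sys+syq=s+0=s$ (using $sq=0$ so $syq=0$? no, $syq$ need not be $0$; but $yq=qy$ and $sq=0$ gives $syq=yqs\cdot$ no). Let me instead observe $ae=(s+q)sy=s^2y+qsy=s+0=s$, so $ae=ea=s$ cleanly, whence $(1-e)a=a-s=q$ and $(1-e)a(1-e)=q(1-e)=q-qe=q-qsy=q$ (since $qs=0$). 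Thus $(1-e)a(1-e)=q\in R^{qnil}$, and $q$ restricted to the corner $(1-e)R(1-e)$ lies in $J((1-e)R(1-e))$ because $q\in J(R)$ by hypothesis and $J((1-e)R(1-e))=(1-e)J(R)(1-e)$. An element of the Jacobson radical of a ring is semiregular (take $b=0$: $0\cdot a\cdot 0=0$ and $a-0\in J$). Since semiregularity is preserved by finite direct products / Peirce decompositions relative to a central-in-$comm^2(a)$ idempotent — here one assembles $b=y+0$ from the two corners and verifies $bab=b$, $a-aba\in J(R)$ using that $e\in comm^2(a)$ so $e$ commutes with $a$, $y$, and the cross terms vanish — we conclude $a$ is semiregular. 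The main obstacle I anticipate is precisely this gluing step: verifying that a ``piecewise semiregular'' element across the Peirce decomposition of an idempotent in $comm^2(a)$ is semiregular, i.e.\ that the two local pseudo-inverses combine to a genuine $b\in R$ with $bab=b$ and $a-aba\in J(R)$; this needs the idempotent to commute with $a$ and with the chosen pseudo-inverses, which is why $e=sy\in comm^2(a)$ (inherited from $s\in comm^2(a)$ and $y\in comm^2(s)$) is essential.

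For the converse, assume $R$ is abelian and semiregular; we must show $R$ is quasipolar and $R^{qnil}\subseteq J(R)$. Since $R$ is abelian, every idempotent is central, so $comm^2(a)=R$ contains all idempotents and the quasipolar condition reduces to: there is an idempotent $p$ with $a+p\in U(R)$ and $ap\in R^{qnil}$ — equivalently, by Theorem~\ref{teo4}'s analogue, $a$ is pseudo... no, quasipolar. Given semiregular $a$, take $b$ with $bab=b$, $a-aba\in J(R)$; then $e=ba$ is idempotent, hence central, and $ab=ba e\cdot$ — in the abelian case $ab=ba$, so $e=ab=ba$ is a central idempotent with $aba=ae$, so $a-ae\in J(R)\subseteq R^{qnil}$. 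I would show $a+(1-e)\in U(R)$: modulo $J(R)$, $a\equiv ae$, and $ae+ (1-e)\cdot$ — in $R/J(R)$, which is von Neumann regular and abelian hence strongly regular, the image of $a$ decomposes, and one lifts a unit. More directly: $ab+(1-e)$ is invertible in $eRe\times(1-e)R(1-e)$ up to $J$, and since $1+J(R)\subseteq U(R)$ one lifts to get $a+(1-e)\in U(R)$; together with $(1-e)\in comm^2(a)$ and $a(1-e)=a-ae\in R^{qnil}$ (as it lies in $J(R)$), $a$ is quasipolar. Finally, for $R^{qnil}\subseteq J(R)$ when $R$ is abelian semiregular: if $q\in R^{qnil}$, semiregularity gives $b$ with $bqb=b$ and $q-bqb\cdot$... wait $q-qbq\in J(R)$; set $e=qb=bq$ (central idempotent), then $qe=qbq\equiv q \pmod{J(R)}$, so $e=qb$ and $q e=q-(q-qbq)$, giving $qe\equiv q$; but $qe\in R^{qnil}\cap eRe$ and $e$ is the identity of $eRe$, and $q e$ being quasinilpotent with $e$ a unit-times in that corner forces $e=0$ (since in $eRe$, $qe$ quasinilpotent means $e-qe\cdot(\text{anything})$ stays a unit, but also $qe$ is von Neumann regular there... one shows a quasinilpotent regular element is $0$, cf.\ Remark~\ref{rem1}(1) which says a strongly regular quasinilpotent element is $0$ — and $qe$ is strongly regular in $eRe$ since $R$ is abelian). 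Hence $e=0$, so $q=q-qe+qe=q-qbq\in J(R)$. The main obstacle in the converse is organizing these lifting-idempotents-and-units-modulo-$J(R)$ arguments cleanly; abelianness is used crucially at every step to make idempotents central and to invoke Remark~\ref{rem1}(1).
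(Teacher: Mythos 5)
Your forward direction is not complete as written, and you flag this yourself: the direct attempt stalls at $yay=y+yqy$ (``not quite $bab=b$ on the nose''), and the Peirce-decomposition fallback leaves the gluing step as an acknowledged obstacle. The missing observation is small but essential. After replacing $y$ by $ysy$ you may assume $y=y^2s=ysy$ (this normalization stays in $comm^2(s)$); then $yq=y^2sq=0$ and $qy=qsy^2=0$, using $sq=qs=0$ together with the fact that $y$ commutes with $q$ (which you did establish: $q\in comm(s)$ and $y\in comm^2(s)$). With this, $yay=ysy+yqy=y$ and $a-aya=(s+q)-(sys+syq+qys+qyq)=(s+q)-s=q\in J(R)$, so $b=y$ works on the nose and no corner-ring gluing is needed. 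You actually wrote down $y^2s=y$ in passing but then only concluded $yqy\in J(R)$ rather than $yqy=0$, which is exactly where the argument slipped. The paper avoids all of this element-wise work: it notes that Corollary~\ref{cor3} together with $R^{qnil}=J(R)$ makes $R/J(R)$ strongly regular, that quasipolar implies strongly clean so idempotents lift modulo $J(R)$, and then quotes \cite[Theorem 2.9]{N0}. Your route, once repaired, is more self-contained (no appeal to Nicholson's characterization of semiregular rings), but it does not exhibit the structural fact about $R/J(R)$ that the paper's proof makes visible.

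Your converse is essentially the paper's argument and is sound in outline: abelianness makes $e=ab=ba$ a central idempotent, so $s=aba\in comm^2(a)$ is strongly regular, $q=a-aba\in J(R)\subseteq R^{qnil}$, $sq=qs=0$, and Corollary~\ref{cor3} gives quasipolarity; and for $R^{qnil}\subseteq J(R)$ your reduction to ``a strongly regular quasinilpotent is zero'' (Remark~\ref{rem1}(1)) correctly forces the central idempotent $e=qb$ to vanish, whence $q=q-qbq\in J(R)$. The paper phrases this last step instead as an application of the uniqueness clause of Theorem~\ref{teo0} (zero is the generalized Drazin inverse of a quasinilpotent, so $y=0$), but the content is the same. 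In short: the converse half would survive tidying; the forward half needs the normalization fix above before it is a proof.
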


\begin{proof} Assume that $R$ is a quasipolar ring and $R^{qnil}\subseteq
J(R)$. Then we have $J(R)=R^{qnil}$. In view of
Corollary~\ref{cor3}, $R/J(R)$ is strongly regular. As $R$ is
quasipolar, $R$ is strongly clean and so idempotents lift modulo
$J(R)$. Then $R$ is semiregular by \cite[Theorem 2.9]{N0}.
Conversely, let $a\in R$. Then there exists $b\in R$ with $bab =
b$ and $a-aba\in J(R)$. Write $a=aba+(a-aba)$, say $s=aba$ and
$q=a-aba$. Since $a-aba\in J(R)\subseteq R^{qnil}$ and $R$ is
abelian, we see that $s\in comm^2(a)$, $q\in R^{qnil}$,
$s=aba=(aba)^2b=s^2b$ and $sq=qs=aba(a-aba)=a^2ba-a^2ba=0$. By
Corollary~\ref{cor3}, $a$ is quasipolar, and so $R$ is quasipolar. Take $x\in R^{qnil}$. By
assumption, there exists $y\in R$ with $yxy=y$ and $x-xyx\in
J(R)$. Note that $x\cdot 0=0$ and $x^2\cdot 0-x=-x\in R^{qnil}$. By Theorem~\ref{teo0}, we get $y=0$. This gives that $x\in J(R)$.
\end{proof}

%Recall that an element $a$ in a general ring $I$ is called a
%\emph{uniquely clean element} if it is uniquely the sum of an
%idempotent and an element of $Q(I)$; and $I$ is called a
%\emph{uniquely clean general ring} if every element is uniquely
%clean.

%Ying and Chen \cite{YC} prove that every uniquely clean ring is
%quasipolar. We now extend this result to uniquely clean general
%rings.

%\begin{thm} Let $I$ be a uniquely clean general ring. Then $I$ is a quasipolar general ring.
%\end{thm}

%\begin{proof} Assume that $I$ is uniquely clean and let $a\in I$.
%Then there exists a unique idempotent $e\in I$ such that $a-e=q\in
%J(I)$ by \cite[Theorem 18]{N3}. Since $-a+e=-q\in Q(I)$ and
%$-a-(-a)e=ae-a=qe-q\in QN(I)$, it suffices to show that $e^2=e\in
%comm^2(a)$. As $I$ is uniquely clean, $e$ is central
%\cite[Proposition 16]{N3}. Therefore, $a$ is quasipolar by
%Corollary~\ref{cor0}.
%\end{proof}

%On the other hand if $I$ is quasipolar, then $I$ need not be a
%uniquely clean general ring. For example, $\Bbb Z_3$ is local and
%so is quasipolar but not uniquely clean because $2\notin J(\Bbb
%Z_3)$.

\section{Extensions of quasipolar general rings}

Let $S$ be a ring and $I$ an $(S,S)$-bimodule which is a general
ring in which $(vw)s = v(ws)$, $(vs)w = v(sw)$ and $(sv)w = s(vw)$
hold for all $v,w\in I$ and $s\in S$. Then the {\em
ideal-extension} (it is also called the Dorroh extension) $I(S;I)$
of $S$ by $I$ is defined to be the additive abelian group $E(S;I)
= S\oplus I$ with multiplication $(s, v)(r,w) = (sr, sw + vr +
vw)$. In this case $I\lhd E(S;I)$, and $E(S;I)/I\cong S$. In
particular, $E(\Bbb Z;I)$ is the standard unitization of the
general ring $I$.

Clean general ideal-extensions are considered in \cite[Proposition
7]{N3}. Now we deal with quasipolar general ideal-extensions.

\begin{prop}\label{prop1} The following are equivalent for a
general ring $I$.

\begin{enumerate}
    \item $I$ is quasipolar.
    \item $(0,a)$ is quasipolar in $E(\Bbb Z;I)$ for all $a\in I$.
    \item There exists a ring $S$ such that $I=_{S}I_{S}$ and $(0,a)$
    is quasipolar in $E(S;I)$ for all $a\in I$.
\end{enumerate}
\end{prop}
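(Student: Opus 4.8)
The plan is to prove the cycle $(1)\Rightarrow(3)\Rightarrow(2)\Rightarrow(1)$, relying on the characterization of quasipolarity via a double-commutant idempotent and the calculus of the operation $p\ast q = p+q-pq$ developed in Section~2. The crucial technical point throughout is to transport statements about quasipolarity, the operation $\ast$, the set $QN(-)$ and double commutants back and forth between $I$ and the ideal-extension $E(S;I)$, where the relevant elements live in the copy $\{0\}\oplus I$.

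For $(1)\Rightarrow(3)$, take $S=\Bbb Z$ (or any ring with $I$ as a bimodule, e.g.\ $S$ already given if one is available — but $\Bbb Z$ always works, so this simultaneously yields $(1)\Rightarrow(2)$). Given $a\in I$ quasipolar, pick $p^2=p\in comm_I^2(a)$ with $a+p\in Q(I)$ and $a-ap\in QN(I)$. The candidate idempotent in $E(S;I)$ is $(0,p)$: one checks $(0,p)^2=(0,p)$ directly from the multiplication rule $(s,v)(r,w)=(sr,sw+vr+vw)$. The three things to verify are: $(0,p)\in comm_{E(S;I)}^2((0,a))$; $(0,a)+(0,p)=(0,a+p)\in Q(E(S;I))$; and $(0,a)-(0,a)(0,p)=(0,a-ap)\in QN(E(S;I))$. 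For the commutant condition, note that $(s,v)$ commutes with $(0,a)$ iff $sa+va=as+av$, i.e.\ $sa+va = as+av$ in $I$; since $p\in comm_I^2(a)$ one must show $p$ (as $(0,p)$) commutes with every such $(s,v)$ — this reduces to showing $p$ commutes with the element $sa+va\in I$-worth of data, and more precisely with $v$ whenever $v$ ranges over the $I$-component of a commutant element; here the bimodule axioms $(vs)w=v(sw)$ etc.\ and the fact that $p$ double-commutes with $a$ do the work, though this is the step that needs genuine care. For $Q$ and $QN$: using the identity $Q(E(S;I))$ is governed by the same $\ast$-operation, and that $(0,v)\ast(0,w) = (0, v\ast w)$ (a one-line check), an element $(0,v)$ lies in $Q(E(S;I))$ iff $v\in Q(I)$, and similarly — using that $comm((0,q))$ in $E(S;I)$ restricts appropriately — $(0,v)\in QN(E(S;I))$ iff $v\in QN(I)$. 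With these equivalences in hand, all three conditions transfer immediately.

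For $(3)\Rightarrow(2)$ there is nothing to prove if one has already established $(1)\Rightarrow(2)$ directly via $S=\Bbb Z$; otherwise, $(3)\Rightarrow(2)$ is the special case $S=\Bbb Z$ of the implication $(3)\Rightarrow$\,[$I$ quasipolar]\,$\Rightarrow(2)$, so it suffices to prove $(2)\Rightarrow(1)$ and $(3)\Rightarrow(1)$ uniformly. For $(3)\Rightarrow(1)$ (which contains $(2)\Rightarrow(1)$), suppose $(0,a)$ is quasipolar in $E(S;I)$. Then there is an idempotent $(e,p)\in comm_{E(S;I)}^2((0,a))$ with $(0,a)+(e,p)\in U$-\,type condition $Q(E(S;I))$ and $(0,a)-(0,a)(e,p)\in QN(E(S;I))$. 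The first task is to show the $S$-component $e$ must vanish: passing to the quotient $E(S;I)/I\cong S$, the image of $(0,a)$ is $0$, which is quasipolar in $S$ with the unique associated idempotent being $0$ — so the image $(e,0)$ of $(e,p)$ must be $0$, forcing $e=0$ provided idempotents associated to a quasipolar element push forward and the image idempotent of $0$ is forced to be $0$. (One uses that $0$ is quasinilpotent, hence quasipolar with idempotent $0$, together with uniqueness of the double-commutant idempotent from Theorem~\ref{teo0}.) Once $e=0$, we have $(0,p)$ idempotent so $p^2=p$ in $I$; then $(0,p)\in comm_{E(S;I)}^2((0,a))$ forces $p\in comm_I^2(a)$ (restricting the big commutant to elements $(0,v)$, $v\in comm_I(a)$), and the $Q$ and $QN$ conditions descend via the same component-wise equivalences as above, giving $a+p\in Q(I)$ and $a-ap\in QN(I)$. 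Hence $a$ is quasipolar, and since $a\in I$ was arbitrary, $I$ is quasipolar.

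The main obstacle I anticipate is the double-commutant bookkeeping: showing precisely that $(0,p)\in comm^2_{E(S;I)}((0,a))$ is equivalent to $p\in comm^2_I(a)$. The subtlety is that a commutant element of $(0,a)$ in $E(S;I)$ has the form $(s,v)$ with a genuine $S$-part, and one must check that $(0,p)$ commuting with all such $(s,v)$ is no stronger than $p$ commuting with all $v\in comm_I(a)$ — this needs the bimodule compatibility axioms and a small argument that the $S$-part contributes nothing new because $(0,p)(s,v)=(0,pv+ps\cdot?)$\,-type products only see $v$ and the action, not $s$ in isolation. I would isolate this as a short preparatory lemma: for $a\in I$, $(0,v)\in comm_{E(S;I)}((0,a))\iff v\in comm_I(a)$, and then deduce the $comm^2$ statement. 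The remaining steps — the $\ast$-arithmetic $(0,v)\ast(0,w)=(0,v\ast w)$, the characterizations of $Q$ and $QN$ of elements of $\{0\}\oplus I$, and the forcing of $e=0$ via the quotient — are short and routine once that lemma is in place.
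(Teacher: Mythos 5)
Your proposal is correct in outline and its $(3)\Rightarrow(1)$ direction is essentially the paper's argument (the paper forces the $S$-component of the idempotent to be $1_S$ because it works with the unital-ring form of quasipolarity via Lemma~\ref{lem1}, whereas you force it to be $0$ working with Definition~\ref{defn1}; these are the same step under the $p\leftrightarrow 1-p$ correspondence, and your quotient argument can even be shortened: an idempotent $e\in Q(S)$ satisfies $e=0$ directly, no appeal to uniqueness needed). Where you genuinely diverge is $(1)\Rightarrow(2)$: the paper does not transfer the spectral idempotent at all, but instead invokes Theorem~\ref{new}/Corollary~\ref{cor3} to decompose $-a=s+q$ with $s$ strongly regular, $q\in QN(I)$ and $sq=qs=0$, lifts each summand to $E(\Bbb Z;I)$, and reassembles. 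Your direct transfer of $p$ and of the conditions $a+p\in Q$, $a-ap\in QN$ is more elementary (it avoids Theorem~\ref{new} and Lemma~\ref{strregular}) and your worry about the $comm^2$ bookkeeping evaporates for $S=\Bbb Z$, since integer multiples satisfy $sa=as$ and $sp=ps$ automatically, so $(s,v)\in comm((0,a))$ iff $v\in comm_I(a)$.

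The one point you should not dismiss as a ``one-line check'' is the transfer $v\in QN(I)\Rightarrow (0,v)\in QN(E(\Bbb Z;I))$. The commutant of $(0,v)$ in the extension contains elements $(x,y)$ with $x\neq 0$, and the resulting product has $I$-component $xv+vy$; since $QN(I)$ only quantifies over commuting elements \emph{of $I$}, the membership $xv+vy\in Q(I)$ is not literally an instance of the definition (there is no element $x\cdot 1+y$ in $I$). The paper has exactly the same gap (``$x+y\in comm(q)$ and so $(x+y)q\in Q(I)$''). It is fillable: one computes $(xv+vy)^2=v\,w$ with $w=x^2v+2xvy+vy^2\in comm_I(v)$, so the square lies in $Q(I)$, and then the $\ast$-identity $z\ast(-z)=(-z)\ast z=z^2$ together with Lemma~\ref{qregular} (as in the paper's lemma showing $QN(I)\subseteq Q(I)$) yields $xv+vy\in Q(I)$. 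With that supplement, and with your $(1)\Rightarrow(3)$ correctly reduced to the case $S=\Bbb Z$, the proof is complete.
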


\begin{proof} (1) $\Rightarrow$ (2) Let $a\in I$ and $R=E(\Bbb Z;I)$. By
Theorem~\ref{new}, we have $-a=s+q$ where $s\in I$ is strongly
regular, $q\in QN(I)$ and $sq=qs=0$. Write $(0,a)=(0,-s)+(0,-q)$.
Since $s$ is strongly regular, there exists $y\in comm^2(s)$ such
that $s=s^2y$ by Lemma~\ref{strregular}. This implies that
$(0,-s)=(0,-s)^2(0,-y)$ and $(0,-y)\in comm^2\big((0,-s)\big)$,
and so, by Lemma~\ref{strregular}, $(0,-s)$ is strongly regular in
$R$. Assume that $(x,y)\in comm\big((0,q)\big)$. Then we have
$x+y\in comm(q)$ and so $(x+y)q\in Q(I)$ because $q\in QN(I)$.
This gives $(1,0)+(x,y)(0,-q)=\big(1,-(x+y)q\big)\in U(R)$ (the
inverse is $(1,-t)$ where $(x+y)q\ast t=0=t\ast (x+y)q$). Hence
$(0,-q)\in R^{qnil}$. As $sq=qs=0$, we see that
$(0,-s)(0,-q)=(0,-q)(0,-s)=(0,0)$, and so $(0,a)\in R$ is
quasipolar by Corollary~\ref{cor3}.

(2) $\Rightarrow$ (3) It is clear with $S=\Bbb Z$.

(3) $\Rightarrow$ (1) Let $a\in I$ and $R=E(S;I)$. By $(3)$,
$(0,-a)+(e,p)=(e,p-a)$ where $(e,p)^2=(e,p)\in
comm^2\big((0,-a)\big)$, $(e,p-a)\in U(R)$ and
$(0,-a)(e,p)=\big(0,-a(e+p)\big)\in R^{qnil}$. Since
$(e,p)^2=(e,p)$, we have $e^2=e$ and $p=ep+pe+p^2$. This gives
that $e=1_S$ because $(e,p-a)\in U(R)$. So $-p$ is an idempotent
in $I$. As $(-1,a-p)\in U(R)$, there exists $q\in I$ such that
$q\ast (a-p)=0=(a-p)\ast q$. This implies that $a+(-p)\in Q(I)$.
If $ax=xa$, then we have $(0,x)\in comm\big((0,-a)\big)$ and so
$xp=px$ because $(1,p)\in comm^2\big((0,-a)\big)$. Hence $-p\in
comm^2(a)$. Now we show that $a+ap\in QN(I)$. Let
$x(a+ap)=(a+ap)x$. As $\big(0,-a(1_S+p)\big)\in R^{qnil}$, it
follows that $x(a+ap)\in Q(I)$. So $a\in I$ is quasipolar. The
proof is completed.
\end{proof}

\begin{thm}\label{teo2} Let $I$ be a quasipolar general ring and
$A\lhd I$. Then $A$ is quasipolar.
\end{thm}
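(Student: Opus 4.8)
The plan is to invoke the generalized-Drazin characterisation of quasipolarity in Theorem~\ref{teo0} as a bridge between $I$ and its ideal $A$. Fix $a\in A$; since $I$ is quasipolar, $a$ is quasipolar in $I$, so Theorem~\ref{teo0} provides $b\in comm_I^2(a)$ with $ab^2=b$ and $a^2b-a\in QN(I)$. The key first remark is that $b\in A$: indeed $b=ab^2=a(b^2)$ lies in $A$ because $a\in A$ and $A\lhd I$. Hence it will suffice to show that this same $b$ witnesses quasipolarity of $a$ inside $A$, that is, that $b\in comm_A^2(a)$, $ab^2=b$ (clear), and $a^2b-a\in QN(A)$; then Theorem~\ref{teo0} applied to the general ring $A$ yields that $a$ is quasipolar in $A$, so $A$ is a quasipolar general ring.

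For the double-commutant condition I would argue as follows. One has $comm_A(a)=comm_I(a)\cap A\subseteq comm_I(a)$, so every element of $A$ commuting with all of $comm_I(a)$ also commutes with all of $comm_A(a)$; that is, $comm_I^2(a)\cap A\subseteq comm_A^2(a)$. Since $b\in comm_I^2(a)$ and $b\in A$, we conclude $b\in comm_A^2(a)$.

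The only genuinely delicate point is that $a^2b-a\in QN(A)$, where the commutant in the definition of $QN$ must be computed inside $A$. I would first record the auxiliary inclusion $Q(I)\cap A\subseteq Q(A)$: if $q\in Q(I)\cap A$ with $p\ast q=0=q\ast p$ for some $p\in I$, then $p+q-pq=0$ forces $p=pq-q\in A$ (as $A$ is an ideal), so $p$ is a legitimate witness in $A$ and $q\in Q(A)$. Using this, I would deduce $QN(I)\cap A\subseteq QN(A)$: given $q\in QN(I)\cap A$ and $x\in comm_A(q)\subseteq comm_I(q)$, we have $qx\in Q(I)$ and $qx\in A$ (again because $A\lhd I$), hence $qx\in Q(A)$ by the previous inclusion; this shows $q\in QN(A)$. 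Applying this with $q=a^2b-a\in A$ gives $a^2b-a\in QN(A)$, completing the three conditions and hence the proof.

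I expect the main obstacle to be precisely this transfer of quasinilpotency: one must check that the defining quantifier over $comm_A(q)$ (rather than $comm_I(q)$) causes no trouble, and that the witness $p$ in the membership $qx\in Q(I)$ can always be chosen in $A$ — both of which rely on $A$ being a two-sided ideal. The commutant inclusions, by contrast, are purely formal once the order-reversal of $X\mapsto comm(X)$ is kept in mind.
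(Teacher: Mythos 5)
Your proof is correct, and it takes a genuinely different route from the paper's. The paper proves this theorem by passing to the unitization $R=E(\Bbb Z;I)$: it decomposes $-a=s+q$ via Theorem~\ref{new} ($s$ strongly regular in $comm^2(a)$, $q\in QN(I)$, $sq=qs=0$), lifts this to show $(0,a)$ is quasipolar in $R$ using Corollary~\ref{cor3}, and then descends to $A$ through Proposition~\ref{prop1} and Remark~\ref{hoca}. You instead stay entirely inside $I$ and work with the generalized Drazin inverse of Theorem~\ref{teo0}: the observation that $b=ab^{2}$ automatically lands in the ideal $A$ is exactly the right starting point, the contravariance argument giving $comm_I^2(a)\cap A\subseteq comm_A^2(a)$ is sound, and your two transfer lemmas $Q(I)\cap A\subseteq Q(A)$ (via $p=pq-q\in A$) and $QN(I)\cap A\subseteq QN(A)$ are correctly proved and are precisely what is needed to push $a^{2}b-a$ down into $QN(A)$. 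Your argument is arguably the cleaner of the two: it avoids the ideal-extension machinery altogether, and in particular it sidesteps the paper's final appeal to Proposition~\ref{prop1}, which as literally stated concerns quasipolarity in $E(S;A)$ rather than in $E(\Bbb Z;I)$ and so requires some extra interpretation to apply. What the paper's route buys in exchange is uniformity: it recycles the unitization technique and the strongly-regular-plus-quasinilpotent decomposition that organize the rest of Section~3, whereas your argument is a self-contained "restriction to an ideal" lemma in the spirit of Lemma~\ref{lem2}.
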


\begin{proof} Let $R=E(\Bbb Z;I)$ and $a\in A$. By
Theorem~\ref{new}, we have $-a=s+q$ where $s\in I$ is strongly
regular, $s\in comm^2(a)$, $q\in QN(I)$ and $sq=qs=0$. Write
$(0,a)=(0,-s)+(0,-q)$. Since $s$ is strongly regular, there exists
$y\in comm^2(s)$ such that $s=s^2y$ by Lemma~\ref{strregular}.
This implies that $(0,-s)=(0,-s)^2(0,-y)$ and $(0,-y)\in
comm^2\big((0,-s)\big)$, and so, by Lemma~\ref{strregular},
$(0,-s)$ is strongly regular in $R$. Assume that $(m,n)\in
comm\big((0,q)\big)$. Then we have $x+y\in comm(q)$ and so
$(m+n)q\in Q(I)$ because $q\in QN(I)$. This gives
$(1,0)+(m,n)(0,-q)=\big(1,-(m+n)q\big)\in U(R)$ (the inverse is
$(1,-t)$ where $(m+n)q\ast t=0=t\ast (m+n)q$). Hence $(0,-q)\in
R^{qnil}$. As $sq=qs=0$, we see that
$(0,-s)(0,-q)=(0,-q)(0,-s)=(0,0)$. Let $(u,v)(0,a)=(0,a)(u,v)$.
Then $(u+v)\in comm(a)$ and so $(u+v)\in comm(s)$ since $s\in
comm^2(a)$. This proves $(u,v)(0,-s)=(0,-s)(u,v)$. That is,
$(0,-s)\in comm^2\big((0,a)\big)$. So $(0,a)\in R$ is quasipolar
by Corollary~\ref{cor3}. As $A\cong (0,A)\lhd R$, $A$ is
quasipolar by Proposition~\ref{prop1} and Remark~\ref{hoca}.
\end{proof}

This result shows that any ideal of a quasipolar general ring is a
quasipolar general ring. For example, every ideal of a strongly
$\pi$-regular ring or every nil ideal is a quasipolar general
ring. But the converse need not be true in general as the
following example shows.

Given a ring $R$, the set $I=\{(a,b)~|~a,b\in R\}$ becomes a
general ring (without identity) with addition defined
componentwise and
multiplication defined by $(a,b)(c,d)=(ac,ad)$. Then  $I\cong \left[%
\begin{array}{cc}
  R & R \\
  0 & 0 \\
\end{array}%
\right]=J$ where $J$ is a right ideal of $M_2(R)$.

\begin{ex}\label{important}\rm{ Consider the local ring $R=\Bbb
Z_{(2)}=\{\frac{m}{n}\in \Bbb Q~|~2\nmid n\}$ and $(a,b)\in I$. If
$a\in J(R)$, then it is easy to verify that $(a,b)\in J(I)$ and so
$(a,b)$ is quasipolar in $I$. If $a\notin J(R)$, then $a\in
1+J(R)$. So $(a,b)+(1,a^{-1}b)=(a+1,b+a^{-1}b)$} where
$(1,a^{-1}b)^2=(1,a^{-1}b)\in comm^2\big((a,b)\big)$ and
$(a+1,b+a^{-1}b)\in J(I)\subseteq Q(I)$. Further, since
$(a,b)-(a,b)(1,a^{-1}b)=(0,0)\in QN(I)$, $(a,b)$ is quasipolar in
$I$. Hence $I$ is a quasipolar general ring. On the other hand
$M_2(R)$ is not a quasipolar ring because $M_2(R)$ is not a
strongly clean ring (see \cite{WC2}).
\end{ex}

\begin{lem}\label{lem2} Let $e^2=e\in I$. Then $QN(eIe)=eIe\cap QN(I)$.
\end{lem}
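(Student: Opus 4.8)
The plan is to prove $QN(eIe) = eIe \cap QN(I)$ by double inclusion, working carefully with the corner general ring $eIe$, whose multiplicative identity is $e$. The key subtlety is that quasinilpotence in $eIe$ is tested against the commutant computed \emph{inside} $eIe$, whereas quasinilpotence in $I$ is tested against the (larger) commutant computed in $I$; so a direct commutant comparison is the heart of the matter. Throughout I will use that for $a \in eIe$ one has $ea = ae = a$, and that in $eIe$ the operation $p \ast q = p + q - pq$ has $e$ playing the role that $1$ plays in a unital ring, so $Q(eIe) = \{q \in eIe \mid e - q \text{ is a unit of } eIe\}$.

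\textbf{The inclusion $QN(eIe) \subseteq eIe \cap QN(I)$.}
Let $a \in QN(eIe)$; certainly $a \in eIe$, so I must show $a \in QN(I)$, i.e. $ax \in Q(I)$ for every $x \in comm_I(a)$. Given such an $x$, the element $exe$ lies in $eIe$ and commutes with $a$ \emph{in} $eIe$ (since $a(exe) = (ax)e = e(xa)e = \cdots$, using $ea = ae = a$ one checks $a\cdot exe = exe \cdot a$). Hence $a(exe) \in Q(eIe)$, so there is $y \in eIe$ with $a(exe) \ast y = y \ast a(exe) = 0$ computed in $eIe$, meaning $a(exe) + y - a(exe)y = 0$ and symmetrically, where all products land in $eIe$. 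Now observe $ax = axe$ because $a = ae$, hence $a(exe) = aexe = axe = ax$; so in fact $ax \in eIe$ and we already have an element $y$ with $ax \ast y = y \ast ax = 0$ as elements of $eIe$. The final step is to check that this same identity, read in the big general ring $I$, exhibits $ax \in Q(I)$: the product $p \ast q$ does not depend on the ambient ring, so $ax \ast y = 0 = y \ast ax$ holds verbatim in $I$, giving $ax \in Q(I)$. Thus $a \in QN(I)$.

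\textbf{The inclusion $eIe \cap QN(I) \subseteq QN(eIe)$.}
Let $a \in eIe$ with $a \in QN(I)$; I must show $ax \in Q(eIe)$ for every $x \in comm_{eIe}(a)$. Fix such an $x \in eIe$ with $ax = xa$. Since $comm_{eIe}(a) \subseteq comm_I(a)$, quasinilpotence of $a$ in $I$ gives $ax \in Q(I)$: there exists $y \in I$ with $ax \ast y = 0 = y \ast ax$ in $I$, i.e. $ax + y - (ax)y = 0$ and $ax + y - y(ax) = 0$. The task is to replace $y$ by an element of $eIe$. From $y = (ax)y - ax$ and $a = ea = ae$ one gets $ey = y$; from $y = y(ax) - ax$ one gets $ye = y$; hence $y = eye \in eIe$ already. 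Therefore $ax \ast y = 0 = y \ast ax$ holds with $ax, y \in eIe$, which is exactly $ax \in Q(eIe)$. Since $x$ was an arbitrary element of $comm_{eIe}(a)$, we conclude $a \in QN(eIe)$, completing the proof.

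\textbf{Where the difficulty lies.}
The computations are all routine; the one genuine point requiring care is the asymmetry of the two commutants. For the forward inclusion the resolution is the trick $ax = a(exe)$, which lets an arbitrary $I$-commutant witness $x$ be replaced by the corner element $exe$ without changing $ax$; for the reverse inclusion the resolution is the automatic absorption $y = eye$ forced by the defining equations $ax \ast y = y \ast ax = 0$ together with $ea = ae = a$. I would present both of these absorption identities explicitly and keep the rest terse, since $p \ast q$ being ring-independent makes transport between $I$ and $eIe$ immediate once the elements are known to live in the corner.
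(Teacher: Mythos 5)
Your proof is correct and follows essentially the same route as the paper: for the forward inclusion you replace an arbitrary $I$-commutant witness $x$ by $exe$ and use $a(exe)=ax$, and for the reverse inclusion you pass from the quasi-inverse $y\in I$ to $eye\in eIe$ (you show $y=eye$ outright, where the paper just verifies the $\ast$-equations for $eye$). Both absorption steps match the paper's computation, so there is nothing substantive to add.
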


\begin{proof} Let $a\in QN(eIe)$ and $ab=ba$ for some $b\in I$.
Then $a\cdot ebe=abe=bae=ba$ and $ebe\cdot a=eba=eab=ab$, so
$ebe\in comm(a)$. Since $a\in QN(eIe)$, we have $ab\ast x=0=x\ast
ab$ for some $x\in eIe$. Hence $a\in eIe\cap QN(I)$. This gives
that $QN(eIe)\subseteq eIe\cap QN(I)$.  Conversely, let $a\in
eIe\cap QN(I)$ and $aere=erea$ for some $ere\in eIe$. This implies
that $ae=ea=a$. Since $a\in QN(I)$, $are+y-arey=0=are+y-yare$ for
some $y\in I$. Then $are+eye-areye=0=are+eye-eyare$ and so $are\in
Q(eIe)$. Therefore $eIe\cap QN(I)\subseteq QN(eIe)$. We complete
the proof.
\end{proof}

\begin{thm}\label{teo3} Let $I$ be a quasipolar general ring with $e^2=e\in
I$. Then $eIe$ is quasipolar.
\end{thm}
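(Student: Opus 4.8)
The plan is to take an arbitrary $a\in eIe$ and show that the Drazin-type witness $b$ produced by Theorem~\ref{teo0} inside $I$ can be reused, unchanged, as a witness for the quasipolarity of $a$ in the corner general ring $eIe$ (which is itself a general ring, with identity $e$). The single structural fact driving the argument is that $ea=ae=a$ for every $a\in eIe$, so $e\in comm_I(a)$, and consequently every element of $comm_I^2(a)$ commutes with $e$.

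So fix $a\in eIe$. Since $I$ is quasipolar, $a$ is quasipolar in $I$, and Theorem~\ref{teo0} gives $b\in comm_I^2(a)$ with $ab^2=b$ and $a^2b-a\in QN(I)$. First I would check that $b\in eIe$: from $ea=a$ and $ab^2=b$ we get $eb=e(ab^2)=(ea)b^2=ab^2=b$, and since $b\in comm_I^2(a)$ and $e\in comm_I(a)$ we have $be=eb=b$; hence $b\in eIe$. Because $eIe$ is closed under the operations of $I$, it follows that $a^2b\in eIe$ and $a^2b-a\in eIe$, and likewise the identity $ab^2=b$ holds verbatim in $eIe$.

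Next I would verify the two remaining hypotheses of Theorem~\ref{teo0} relative to $eIe$. Since any element commuting with $a$ inside $eIe$ certainly commutes with $a$ inside $I$, we have $comm_{eIe}(a)\subseteq comm_I(a)$; as $b\in comm_I^2(a)$ and $b\in eIe$, this forces $b\in comm_{eIe}^2(a)$. For the quasinilpotent condition, $a^2b-a\in eIe$ and $a^2b-a\in QN(I)$, so by Lemma~\ref{lem2}, which says $QN(eIe)=eIe\cap QN(I)$, we get $a^2b-a\in QN(eIe)$. Now Theorem~\ref{teo0}, applied in the general ring $eIe$, yields that $a$ is quasipolar in $eIe$. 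As $a\in eIe$ was arbitrary, $eIe$ is quasipolar.

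The only step with actual content is the relocation $b\in eIe$, and that rests entirely on $e\in comm_I(a)$; everything afterwards is formal bookkeeping, since passing to a corner can only shrink commutants and Lemma~\ref{lem2} already pins down the quasinilpotents of $eIe$. An alternative would be to go through Theorem~\ref{new}, decomposing $a=s+q$ and transferring strong regularity of $s$ to $eIe$, but the route through Theorem~\ref{teo0} avoids any discussion of strong regularity.
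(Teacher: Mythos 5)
Your proof is correct, and it takes a genuinely different (though closely related) route from the paper's. The paper works directly with the defining data of quasipolarity: it takes the idempotent $p\in comm^2(a)$ and the quasiregular element $q=a+p$ in $I$, corners them to $epe$ and $eqe$, and checks that $(epe, eqe, a-ap)$ witnesses quasipolarity of $a$ inside $eIe$, using Lemma~\ref{lem2} for the quasinilpotent part and the analogous identity $Q(I)\cap eIe=Q(eIe)$ for the quasiregular part. You instead pass through the generalized Drazin characterization of Theorem~\ref{teo0}, and your key observation --- that $ab^2=b$ together with $b\in comm_I^2(a)$ and $e\in comm_I(a)$ forces $eb=be=b$, so the witness $b$ already lies in the corner without any modification --- is sound and does real work. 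After that, the inclusion $comm_{eIe}(a)\subseteq comm_I(a)$ gives $b\in comm_{eIe}^2(a)$ for free, and Lemma~\ref{lem2} handles $a^2b-a\in QN(eIe)$ exactly as in the paper. What your route buys is that you never need to verify that a cornered quasiregular element is quasiregular in $eIe$ (the fact $eqe\in Q(eIe)$, which the paper asserts but does not isolate as a lemma); the quasiregularity bookkeeping is entirely absorbed into the already-proved implication $(2)\Rightarrow(1)$ of Theorem~\ref{teo0}, applied in the unital ring $eIe$. The trade-off is that you invoke the heavier equivalence theorem twice, whereas the paper's argument is self-contained at the level of the definition.
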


\begin{proof} Let $a\in eIe$. Then there exists $p^2=p\in
comm^2(a)$ such that $a+p=q\in Q(I)$ and $a-ap\in QN(I)$. Since
$ae=ea$, we have $ep=pe$. This implies that $a+epe=eqe$ where
$epe^2=epe$ and $eqe\in Q(I)\cap eIe=Q(eIe)$. It is easy to see
that $epe\in comm^2(a)$ because $p^2=p\in comm^2(a)$. As $a-ap\in
QN(I)$, we have $a-ap=a-aep=a-aepe=a-ape=e(a-ap)e\in QN(I)\cap
eIe=QN(eIe)$ by Lemma~\ref{lem2}. Hence $eIe$ is quasipolar.
\end{proof}

\begin{cor}\label{cornerqsr} Let $R$ be a ring with $e^2=e\in R$. If $R$ is
quasipolar, then so is $eRe$.
\end{cor}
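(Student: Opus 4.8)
The plan is to deduce this immediately from Theorem~\ref{teo3} together with Lemma~\ref{lem1}. First I would observe that a ring is in particular a general ring, and that for a ring with identity the notion of ``quasipolar general ring'' (Definition~\ref{defn1}) coincides with the classical notion of ``quasipolar ring'': this is exactly the content of Lemma~\ref{lem1}, since $Q(R)=\{q\in R~|~1-q\in U(R)\}$ and $QN(R)=R^{qnil}$ whenever $R$ has an identity. Hence the hypothesis that $R$ is quasipolar says precisely that $R$ is a quasipolar general ring.

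Next I would apply Theorem~\ref{teo3} with $I=R$ and the given idempotent $e\in R$, which yields that the corner $eRe$ is a quasipolar general ring. Finally, since $eRe$ is itself a ring with identity element $e$, Lemma~\ref{lem1} applied to $eRe$ converts ``quasipolar general ring'' back into ``quasipolar ring,'' so $eRe$ is quasipolar, as claimed. There is no genuine obstacle here: all the substantive work — commuting $e$ past $p$, passing the idempotent and the quasiregular part into the corner, and invoking $QN(eIe)=eIe\cap QN(I)$ from Lemma~\ref{lem2} — is already carried out in the proof of Theorem~\ref{teo3}. The only point worth a sentence is the routine bookkeeping, via Lemma~\ref{lem1}, that matches the general-ring definition with the classical one at both ends of the argument.
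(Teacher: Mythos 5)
Your proposal is correct and is exactly the route the paper intends: the corollary is stated without proof precisely because it is the specialization of Theorem~\ref{teo3} to a ring with identity, with Lemma~\ref{lem1} supplying the routine translation between the general-ring definition of quasipolarity and the classical one (noting that $eRe$ is a ring with identity $e$). No gaps.
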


\section{Pseudopolar Elements}

An element $a$ of $R$ is \emph{pseudo Drazin invertible} in case
there exist $b\in R$ and $k\in \Bbb N$ satisfying $ab^2 = b$,
$b\in comm^2(a)$ and $(a-a^2b)^k\in J(R)$ . Such $b$, if it
exists, is unique; it is called a \emph{pseudo Drazin inverse} of
$a$. Wang and Chen \cite{WC3} showed that an element $a\in R$ is
pseudo Drazin invertible if and only if $a$ is pseudopolar; that
is, there exist $p\in R$ and $k\in \Bbb N$ such that $p^2=p\in
comm^2(a)$, $a+p\in U(R)$ and $a^kp\in J(R)$.

A characterization of pseudopolar elements can be given as
follows.

\begin{thm}\label{pseudo} Let $R$ be a ring and let $a\in R$. Then the following are equivalent.

\begin{enumerate}
    \item $a$ is pseudopolar.
    \item $a=s+q$ where $s$ is strongly regular, $s\in comm^2(a)$, $q\in J^{\#}(R)$ and $sq=qs=0$.
\end{enumerate}
\end{thm}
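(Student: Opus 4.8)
The statement is a pseudopolar analogue of Theorem~\ref{new} (for quasipolar elements) and Theorem~\ref{chen} (for strongly $\pi$-regular elements), with the role of $QN(I)$ or $Nil(R)$ played by $J^{\#}(R)$. So the natural approach is to mimic the proof of Theorem~\ref{new}, replacing the use of Theorem~\ref{teo0} by the known equivalence between pseudopolarity and pseudo Drazin invertibility due to Wang and Chen \cite{WC3}, and replacing appeals to ``$q\in QN(I)$'' by ``$q\in J^{\#}(R)$''. The key structural fact I would use repeatedly is that $J(R)\subseteq J^{\#}(R)\subseteq R^{qnil}$, already recorded in the introduction; in particular $J^{\#}(R)$ elements behave like quasinilpotent ones for the $\ast$-inverse arguments.

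**Direction $(1)\Rightarrow(2)$.** Suppose $a$ is pseudopolar, so by \cite{WC3} there exist $b\in comm^2(a)$ and $k\in\Bbb N$ with $ab^2=b$ and $(a-a^2b)^k\in J(R)$. Set $s=a^2b$ and $q=a-a^2b$, exactly as in Theorem~\ref{new}. Then $p:=ab$ is an idempotent (since $p^2=abab=a^2b^2=ab=p$ using $ab^2=b$) lying in $comm^2(a)$, and $s=a\cdot ab=ap$ while $q=a-ap=a(1-p)$; from $p(1-p)=0$ one gets $sq=qs=a^2b(a-a^2b)=0$. Also $s=a^2b=(a^2b)^2b=s^2b$ with $b\in comm^2(a)\subseteq comm^2(s)$ (one checks $comm(s)\supseteq comm(a)$ in the relevant way, as in the existing proofs), so $s$ is strongly regular by Lemma~\ref{strregular}. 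It remains to see $q\in J^{\#}(R)$: we have $q^k=(a-a^2b)^k\in J(R)$ by hypothesis, so $q\in J^{\#}(R)$ by definition. And $s=a^2b\in comm^2(a)$ since $b\in comm^2(a)$ and $a\in comm^2(a)$. This gives $(2)$.

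**Direction $(2)\Rightarrow(1)$.** Suppose $a=s+q$ with $s$ strongly regular, $s\in comm^2(a)$, $q\in J^{\#}(R)$, $sq=qs=0$. By Lemma~\ref{strregular} there is $y\in comm^2(s)$ with $s=s^2y$; then $e:=sy=ys$ is an idempotent in $comm^2(s)$, and $yq=qy$ (since $q$ commutes with $s$, hence with $y$). Following the $\ast$-computation in the proof of Theorem~\ref{new} verbatim (all of it is formal $\ast$-algebra that does not use quasinilpotence), one obtains $a+e=s+sy+q$ and, since $q\in J^{\#}(R)\subseteq R^{qnil}\subseteq Q(R)$ with a $\ast$-inverse, that $a+e\in Q(R)=\{x\in R\mid 1-x\in U(R)\}$; equivalently, writing $p=1-e$, one gets $p^2=p$, $a+p\in U(R)$. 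Then $a-a(1-p)$-type bookkeeping gives $a-a(1-p)=a-ae=s+q-s^2y-qsy=q$, so $a^k(1-p)=a^k e$ is governed by $q$; more precisely, since $ae=se$ is ``the strongly regular part'', one shows $a^k(1-p)=(a(1-p))^k\cdot(\text{unit-type factor})$ and $(a(1-p))^k=q^k\in J(R)$ for the $k$ with $q^k\in J(R)$. Finally $1-p=e=sy\in comm^2(a)$, because $comm(a)\subseteq comm(s)$ (from $s\in comm^2(a)$) and $y\in comm^2(s)$, so anything commuting with $a$ commutes with $s$, hence with $y$, hence with $e$. Thus $a$ is pseudopolar.

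**Expected main obstacle.** The genuinely new content is purely bookkeeping: the formal $\ast$-identities and the $comm^2$ verifications are identical to those already carried out in Theorems~\ref{teo0}, \ref{teo4}, \ref{new}, so the only point requiring care is the exponent match in the $J^{\#}$ condition — namely that the single $k$ with $q^k\in J(R)$ is the same $k$ making $a^k p\in J(R)$ (equivalently $(a-a^2b)^k\in J(R)$). This works because $q=a(1-p)=a-ap$ and $ap=pa$, so $q^k=a^k(1-p)^k\cdot(\text{sign})=a^k(1-p)=a-a^kp$-style telescoping shows $q^k$ and $a^k(1-p)$ differ by nothing (as $1-p$ is idempotent and central w.r.t.\ $a$), hence lie in $J(R)$ simultaneously; I would make this identification explicit rather than gloss it. Beyond that, the proof is a routine transcription, and I would present it concisely, citing \cite{WC3} for the pseudopolar $\Leftrightarrow$ pseudo Drazin invertible equivalence and Lemma~\ref{strregular} for the strongly regular characterization.
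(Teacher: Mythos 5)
Your overall strategy coincides with the paper's: direction $(1)\Rightarrow(2)$ is the same computation (set $s=a^2b$, $q=a-a^2b$, check $sq=qs=0$, $s=s^2b$, $q^k\in J(R)$), and it is correct --- though the inclusion you invoke goes the wrong way ($comm(a)\subseteq comm(s)$ yields $comm^2(s)\subseteq comm^2(a)$, not $comm^2(a)\subseteq comm^2(s)$); this is harmless because strong regularity of $s$ only requires $b\in comm(s)$, which follows from $b\in comm^2(a)$ and $s\in comm(a)$.

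The genuine gap is in $(2)\Rightarrow(1)$, at the step ``$a+e\in Q(R)$; equivalently, writing $p=1-e$, one gets $a+p\in U(R)$.'' This is not an equivalence: $a+e\in Q(R)$ means $1-(a+e)\in U(R)$, i.e. $(1-e)-a=p-a\in U(R)$, so what you actually obtain is $a-p\in U(R)$ --- that is, $-a$ (not $a$) satisfies the unit condition with the idempotent $p$. The same sign is latent in Lemma~\ref{lem1} and Theorem~\ref{new}, where it is innocuous because quasipolarity of $a$ and of $-a$ are equivalent (Corollary~\ref{cor0}); but the analogous fact for pseudopolarity is derived in the paper only \emph{from} Theorem~\ref{pseudo}, so you cannot quote it here without circularity. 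The gap is localized and repairable: either show via the Pierce decomposition relative to $p$ that $a+p$ and $a-p$ are simultaneously units once $(ap)^k\in J(R)$ and $p\in comm(a)$, or --- as the paper does --- bypass $Q(R)$ altogether and verify directly that $(a+p)(y^2s+p)=(y^2s+p)(a+p)=1+q$, which lies in $U(R)$ since $q^n\in J(R)$. For the radical condition the paper computes $a^np=(s^n+q^n)(1-sy)=q^n\in J(R)$; your observation that $ap=q$, whence $a^kp=(ap)^k=q^k\in J(R)$, is an equally valid and in fact cleaner finish, and it settles the ``exponent matching'' worry you raise: the same $k$ works automatically.
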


\begin{proof} $(1)\Rightarrow(2)$ Assume that $a\in R$ is
pseudopolar. Then there exist $b\in comm^2(a)$ and $k\in \Bbb N$
such that $ab^2=b$ and $(a-a^2b)^k\in J(R)$. Set $s=a^2b$ and
$q=a-a^2b$. This gives $s\in comm^2(a)$, $q\in J^{\#}(R)$ and
$sq=qs=a^2b(a-a^2b)=0$. It is easy to see that $s=s^2b$ and so
$s\in R$ is strongly regular.

$(2)\Rightarrow(1)$ Suppose that $a=s+q$ where $s$ is strongly
regular, $s\in comm^2(a)$, $q\in J^{\#}(R)$ and $sq=qs=0$. Since
$s$ is strongly regular, there exists $y\in comm^2(s)$ such that
$s=s^2y$ by Lemma~\ref{strregular}. Then we have $1-p=sy=ys$ is an
idempotent, $p\in comm^2(a)$ and $yq=qy$. As $q\in J^{\#}(R)$, we
see that $q^n\in J(R)$ and so $1+q\in U(R)$ for some $n\in \Bbb
N$. Hence $(a+p)(y^2s+p)=1+q\in U(R)$ and so $a+p\in U(R)$.
Moreover, $a^np=(s^n+q^n)(1-sy)=q^n\in J(R)$ because
$s^n=s^{n+1}y$. So holds $(1)$.
\end{proof}

Note that if $R$ is pseudopolar, then $R$ is quasipolar by
Theorem~\ref{pseudo} and Corollary~\ref{cor3}. Further, if $-a$ is
pseudopolar, then so is $a$ by Theorem~\ref{pseudo}.

Combining Theorem~\ref{teo4} and Theorem~\ref{pseudo}, we obtain
the following result.

\begin{cor}\cite[Theorem 2.1]{WC3} Let $R$ be a ring. Then $R$ is strongly $\pi$-regular if and
only if $R$ is pseudopolar and $J(R)$ is nil.
\end{cor}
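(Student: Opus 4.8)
The plan is to obtain both implications by unwinding the decompositions furnished by Theorem~\ref{pseudo} and Theorem~\ref{teo4}, together with the observation that the hypothesis ``$J(R)$ is nil'' is precisely what identifies $J^{\#}(R)$ with $Nil(R)$.

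For the forward implication, assume $R$ is strongly $\pi$-regular and fix $a\in R$. By Theorem~\ref{teo4} there is $b\in comm^2(a)$ with $ab^2=b$ and $a^2b-a\in Nil(R)$; choosing $k\in\Bbb N$ with $(a^2b-a)^k=0$ gives $(a-a^2b)^k=0\in J(R)$, so $a$ is pseudo Drazin invertible and hence, by the Wang--Chen equivalence recalled just before Theorem~\ref{pseudo}, $R$ is pseudopolar. To see that $J(R)$ is nil, take $x\in J(R)$; strong $\pi$-regularity yields $n\in\Bbb N$ and $c\in comm(x)$ with $x^n=x^{n+1}c$, whence $x^n(1-xc)=0$; since $xc\in J(R)$, the element $1-xc$ is a unit, forcing $x^n=0$.

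For the converse, assume $R$ is pseudopolar with $J(R)$ nil. First note that $J^{\#}(R)=Nil(R)$: if $q\in J^{\#}(R)$ then $q^m\in J(R)$ for some $m$, and nilness of $J(R)$ makes $q^m$, hence $q$, nilpotent, while $Nil(R)\subseteq J^{\#}(R)$ always holds. Now fix $a\in R$. By Theorem~\ref{pseudo} we may write $a=s+q$ with $s$ strongly regular, $s\in comm^2(a)$, $q\in J^{\#}(R)=Nil(R)$ and $sq=qs=0$. This is exactly the decomposition appearing in condition $(2)$ of Theorem~\ref{chen}, so $a$ is strongly $\pi$-regular; as $a$ was arbitrary, $R$ is strongly $\pi$-regular.

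There is no genuine obstacle here; the only point requiring routine care is checking that the two decompositions match the hypotheses of Theorem~\ref{chen} and Theorem~\ref{pseudo} on the nose --- in particular that the extra condition $s\in comm^2(a)$ supplied by Theorem~\ref{pseudo} is harmless (it is not needed to invoke Theorem~\ref{chen}), and that nilpotence of $a^2b-a$ trivially yields $(a-a^2b)^k\in J(R)$, which is all that the definition of pseudopolar demands.
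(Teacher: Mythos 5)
Your proof is correct and matches the paper's (one-line) argument, which simply combines Theorem~\ref{teo4} and Theorem~\ref{pseudo}; you have merely filled in the details, routing the converse through Theorem~\ref{chen} and correctly observing that $J(R)$ nil forces $J^{\#}(R)=Nil(R)$. The auxiliary steps (nilness of $J(R)$ under strong $\pi$-regularity via $x^n(1-xc)=0$, and the harmlessness of the extra condition $s\in comm^2(a)$) are all sound.
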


We give a simple proof of the \cite[Theorem 2.4]{WC3}.

\begin{thm}  Let $R$ be a ring. If $R$ is pseudopolar and $J^{\#}(R)=J(R)$, then
$R$ is semiregular. The converse holds if $R$ is abelian.
\end{thm}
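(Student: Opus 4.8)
The plan is to imitate the proof of Theorem~\ref{semi}, with $J^{\#}(R)$ playing the role of $R^{qnil}$ and Theorem~\ref{pseudo} playing the role of Corollary~\ref{cor3}; note that the hypothesis $J^{\#}(R)=J(R)$ is the pseudopolar analogue of $R^{qnil}\subseteq J(R)$, since $J(R)\subseteq J^{\#}(R)$ always holds.

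For the direct implication, assume $R$ is pseudopolar and $J^{\#}(R)=J(R)$. First I pass to $R/J(R)$. Given $a\in R$, Theorem~\ref{pseudo} writes $a=s+q$ with $s$ strongly regular, $s\in comm^{2}(a)$, $q\in J^{\#}(R)=J(R)$ and $sq=qs=0$, so modulo $J(R)$ we obtain $\bar a=\bar s$, which is strongly regular in $R/J(R)$; hence $R/J(R)$ is strongly regular, in particular von Neumann regular. Next, a pseudopolar ring is quasipolar (the remark following Theorem~\ref{pseudo}), hence strongly clean, so idempotents lift modulo $J(R)$. Combining these two facts, $R$ is semiregular by \cite[Theorem 2.9]{N0}, exactly as in the proof of Theorem~\ref{semi}.

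For the converse, suppose $R$ is abelian and semiregular. The first task is to show $R$ is pseudopolar: for $a\in R$ pick $b$ with $bab=b$ and $a-aba\in J(R)$, and put $s=aba$, $q=a-aba$. As in the converse of Theorem~\ref{semi}, abelianness gives $s\in comm^{2}(a)$, $s=s^{2}b$ (so $s$ is strongly regular by Lemma~\ref{strregular}) and $sq=qs=0$, while $q\in J(R)\subseteq J^{\#}(R)$; Theorem~\ref{pseudo} then makes $a$ pseudopolar. The second task is to show $J^{\#}(R)=J(R)$. Since $R$ is semiregular, $R/J(R)$ is von Neumann regular, and since $R$ is abelian with idempotents lifting modulo $J(R)$, the quotient $R/J(R)$ is abelian as well; an abelian von Neumann regular ring is strongly regular and therefore reduced. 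Consequently, if $x\in J^{\#}(R)$, say $x^{n}\in J(R)$, then $\bar x^{\,n}=0$ in the reduced ring $R/J(R)$, so $\bar x=0$, i.e.\ $x\in J(R)$; thus $J^{\#}(R)\subseteq J(R)$, and equality holds. (Alternatively, one can end as in Theorem~\ref{semi}: for $x\in J^{\#}(R)\subseteq R^{qnil}$ and $y$ with $yxy=y$, $x-xyx\in J(R)$ coming from semiregularity, the element $0$ is the object attached to $x$ in Theorem~\ref{teo0}, and checking $xy^{2}=y$, $x^{2}y-x=-(x-xyx)\in R^{qnil}$ and $y\in comm^{2}(x)$ forces $y=0$, whence $x=x-xyx\in J(R)$.)

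The step I expect to be the most delicate is this last one, $J^{\#}(R)\subseteq J(R)$ in the converse: the reduced-quotient argument above is clean, whereas the direct imitation of Theorem~\ref{semi} requires a careful verification — this is precisely where abelianness is used — that the semiregular partial inverse $y$ of $x$ genuinely lies in $comm^{2}(x)$, so that the uniqueness in Theorem~\ref{teo0} can be invoked. On the direct side, the only point needing care is to invoke Theorem~\ref{pseudo} rather than Corollary~\ref{cor3}, so that $R/J(R)$ comes out strongly regular rather than merely strongly $\pi$-regular.
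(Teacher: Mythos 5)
Your proposal is correct and follows essentially the same route as the paper: Theorem~\ref{pseudo} gives $R/J(R)$ strongly regular and idempotents lift since pseudopolar implies strongly clean, so \cite[Theorem 2.9]{N0} applies; the converse uses the same decomposition $s=aba$, $q=a-aba$ together with abelianness. The only (harmless) deviation is in proving $J^{\#}(R)\subseteq J(R)$: the paper invokes Theorem~\ref{semi} to get $R^{qnil}\subseteq J(R)$ and then uses $J^{\#}(R)\subseteq R^{qnil}$, while you argue via reducedness of the strongly regular quotient $R/J(R)$ — both work, and your parenthetical alternative is exactly the paper's mechanism.
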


\begin{proof} Assume that $R$ is pseudopolar and $J^{\#}(R)=J(R)$.
According to Theorem~\ref{pseudo}, $R/J(R)$ is strongly regular.
Hence $R$ is semiregular by \cite[Theorem 2.9]{N0}. Conversely,
let $a\in R$. Then there exists $b\in R$ with $bab = b$ and
$a-aba\in J(R)$. Write $a=aba+(a-aba)$, say $s=aba$ and $q=a-aba$.
Since $a-aba\in J(R)\subseteq J^{\#}(R)$ and $R$ is abelian, we
see that $s\in comm^2(a)$, $q\in J^{\#}(R)$, $s=aba=(aba)^2b=s^2b$
and $sq=qs=aba(a-aba)=a^2ba-a^2ba=0$. By Theorem~\ref{pseudo}, $a$
is pseudopolar. In view of Theorem~\ref{semi}, we see that
$J^{\#}(R)=J(R)$.
\end{proof}

Recall that an element $a\in R$ is {\it strongly $\pi$-rad clean}
provided that there exists an idempotent $e\in R$ such that
$ae=ea$ and $a-e\in U(R)$ and $a^ne\in J(R)$ for some $n\in \Bbb
N$. A ring $R$ is {\it strongly $\pi$-rad clean} in case every
element in $R$ is strongly $\pi$-rad clean (see \cite{Di}). We now
give the relations among quasipolarity, strong $\pi$-rad cleanness
and pseudopolarity.

\begin{thm} \label{orhan} Let $R$ be a ring. Then $R$ is
pseudopolar if and only if $R$ is strongly $\pi$-rad clean and
quasipolar.
\end{thm}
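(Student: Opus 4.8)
The plan is to prove both implications directly, leaning on the characterizations already established: Theorem~\ref{pseudo} for pseudopolarity, Corollary~\ref{cor3} for quasipolarity, and Theorem~\ref{teo4} (the strongly $\pi$-regular characterization via $p^2=p\in comm(a)$ with $a-ap\in Nil$) together with the definition of strong $\pi$-rad cleanness. The forward direction should be nearly immediate: if $R$ is pseudopolar, then $R$ is quasipolar by the remark following Theorem~\ref{pseudo}, and the pseudopolar witness $p^2=p\in comm^2(a)$ with $a+p\in U(R)$ and $a^kp\in J(R)$ is literally a strong $\pi$-rad clean decomposition once we pass to the idempotent $e=1-p$ (so $a-e=a-1+p$; note $a+p\in U(R)$ forces $a-(1-p)\in U(R)$ after adjusting signs — I would check that $a-e\in U(R)$ follows, possibly replacing $a$ by using that $-a$ is also pseudopolar, or simply observing $a-e = a-1+p$ and that $u=a+p$ being a unit gives $a-1+p = u-1$, so I need $u-1\in U(R)$; this is where a small argument is needed, and I will instead directly use the ``$a-p\in U(R)$'' form from the definition and set $e=p$). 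So the forward direction reduces to matching definitions, with the only subtlety being the sign convention between ``$a+p\in U(R)$'' and ``$a-e\in U(R)$''.

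For the converse, suppose $R$ is both strongly $\pi$-rad clean and quasipolar, and fix $a\in R$. From strong $\pi$-rad cleanness pick an idempotent $e$ with $ae=ea$, $a-e\in U(R)$, and $a^ne\in J(R)$ for some $n$. From quasipolarity (via Corollary~\ref{cor3}) write $a=s+q$ with $s$ strongly regular, $s\in comm^2(a)$, $q\in R^{qnil}$, $sq=qs=0$; equivalently (Theorem~\ref{teo0}/Corollary~\ref{cor3}) there is the quasipolar idempotent $f=f^2\in comm^2(a)$ with $a+f\in U(R)$ and $a-af=q\in R^{qnil}$. The goal is to show $a$ is pseudopolar, i.e.\ to produce $p^2=p\in comm^2(a)$ with $a+p\in U(R)$ and $a^kp\in J(R)$ for some $k$. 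The natural candidate is the quasipolar idempotent $p=f$ itself: it already lies in $comm^2(a)$ and satisfies $a+f\in U(R)$, so the entire content is to show $a^k f\in J(R)$ for some $k$. Now $af = a - q$, and more usefully $a^k f = (af)^k f^{?}$ — using that $f$ is idempotent and commutes with $a$, $a^k f = (af)^k$, and $af$ is a quasinilpotent-adjacent element; actually $af\in R^{qnil}$ is not quite right, rather $a-af=q\in R^{qnil}$, so $af = a-q$. I would instead argue via the Peirce decomposition with respect to $f$: in $fRf$ the element $fa=faf = af$ is quasinilpotent (it equals the ``quasinilpotent part''), and I must upgrade quasinilpotent to ``some power in $J$''. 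The strong $\pi$-rad clean idempotent $e$ is what supplies this extra information: $a^n e\in J(R)$ says the ``$J$-adically small part'' of $a$ is captured by $e$.

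The key step, and the main obstacle, is to reconcile the two idempotents $e$ (from strong $\pi$-rad cleanness) and $f$ (the quasipolar idempotent): I expect they coincide, or at least generate the same corner. The standard fact is that for a strongly clean element the idempotent appearing in a decomposition with a quasinilpotent (or $J$-power-nilpotent) complement and lying in $comm^2(a)$ is unique; since $1-e$ plays the role of an idempotent with $a-(1-(1-e)) = a-e\in U(R)$ — I will want to show $1-e\in comm^2(a)$, which requires $e\in comm^2(a)$, and strong $\pi$-rad cleanness only gives $e\in comm(a)$. To get $e\in comm^2(a)$ I would use that $a^n e\in J(R)$ together with $a-e\in U(R)$ to show $e$ is the ``spectral idempotent'' and invoke a uniqueness argument as in the proof of Theorem~\ref{teo0}: if $x\in comm(a)$ then $x$ commutes with $e$ because $e$ can be written as a power series / polynomial-like expression in $a$ modulo the decomposition $a=(a-e)+e$ with the two pieces satisfying $e(a-e)\in J(R)$-type relations. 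Once $e\in comm^2(a)$ is established, $1-e$ is an idempotent in $comm^2(a)$ with $a+(1-e)\in U(R)$ (since $a-e\in U(R)$ means $a+1-(1-e)... $ — again the sign bookkeeping), and $a-a(1-e)=ae$ satisfies $(ae)^n = a^n e\in J(R)$, so $a-a(1-e)\in J^{\#}(R)$; writing $s=a(1-e)$ (strongly regular, since $a^n(1-e)$ stabilizes) and $q=ae\in J^{\#}(R)$ with $sq=qs=0$ and $s\in comm^2(a)$, Theorem~\ref{pseudo} gives that $a$ is pseudopolar. So the proof outline is: (forward) definitions plus the post-Theorem~\ref{pseudo} remark; (backward) use quasipolarity to get $e\in comm^2(a)$ from strong $\pi$-rad cleanness via a uniqueness-of-idempotent argument, then read off the Theorem~\ref{pseudo} decomposition $a = a(1-e)+ae$. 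The delicate point throughout is the uniqueness/double-commutant upgrade for the strong $\pi$-rad clean idempotent, which I would handle exactly as the uniqueness argument in Theorem~\ref{teo0} with $J(R)$ in place of $R^{qnil}$.
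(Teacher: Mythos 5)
Your approach is the same as the paper's: in the hard direction, the whole proof consists of identifying the strongly $\pi$-rad clean idempotent with the quasipolar spectral idempotent, so that the single idempotent inherits both the double-commutant property (from quasipolarity) and the ``$a^n(\cdot)\in J(R)$'' property (from strong $\pi$-rad cleanness). The paper does this in one line by citing \cite[Proposition 2.3]{KP} (uniqueness of the spectral idempotent among idempotents merely commuting with $a$), whereas you propose to re-derive the uniqueness by hand. The two points you leave open do close, and more simply than you suggest. First, the sign mismatch between ``$a-e\in U(R)$'' and ``$a+p\in U(R)$'' is not resolved by passing to $1-e$ (indeed $a+(1-e)\in U(R)$ does \emph{not} follow from $a-e\in U(R)$); it is resolved by applying the strong $\pi$-rad clean hypothesis to $-a$, which yields an idempotent $q\in comm(a)$ with $a+q\in U(R)$ and $a^nq\in J(R)$, hence $aq\in J^{\#}(R)\subseteq R^{qnil}$. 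Second, you do not need any ``power series'' argument to put $e$ in $comm^2(a)$: once $q$ is recognized as a spectral-type idempotent for $a$ (commuting with $a$, $a+q\in U(R)$, $aq\in R^{qnil}$), the uniqueness argument of Theorem~\ref{teo0} applies verbatim even though $q$ is only in $comm(a)$, because the genuine spectral idempotent $p$ lies in $comm^2(a)$ and therefore automatically commutes with $q$; this forces $p=q$, and then $p\in comm^2(a)$, $a+p\in U(R)$ and $a^np\in J(R)$ give pseudopolarity directly, without detouring through the $s+q$ decomposition of Theorem~\ref{pseudo}. With those two repairs your outline is a correct proof and coincides with the paper's.
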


\begin{proof} The `` only if " part is easy to see. So we have
only to prove the `` if " part. Let $a\in R$. Then there exists
$p^2=p\in comm^2(a)$ such that $a+p\in U(R)$ and $ap\in R^{qnil}$
since $R$ is quasipolar. Further, there exists $q\in comm(a)$ such
that $-a-q\in U(R)$ and $a^nq\in J(R)$ for some $n\in \Bbb N$
because $R$ is strongly $\pi$-rad clean. Since $a^nq\in J(R)$, we
have $aq\in R^{qnil}$. By \cite[Proposition 2.3]{KP}, we see that
$p=q$. Hence $a$ is pseudopolar, as desired.
\end{proof}

\begin{cor}\cite[Corollary 2.12]{WC3} Let $R$ be a ring with $e^2=e\in R$. If $R$ is
pseudopolar, then so is $eRe$.
\end{cor}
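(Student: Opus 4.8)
The plan is to prove Theorem~\ref{orhan} by splitting into the two implications, relying on the characterizations already available in the excerpt. The ``only if'' direction is essentially a matter of unpacking definitions: if $R$ is pseudopolar, then by the remark following Theorem~\ref{pseudo}, $R$ is quasipolar; and pseudopolarity directly yields, for each $a$, an idempotent $p\in comm^2(a)\subseteq comm(a)$ with $a+p\in U(R)$ and $a^kp\in J(R)$ for some $k$, which is exactly the condition for $a$ to be strongly $\pi$-rad clean (after replacing $p$ by $-p$ or absorbing signs, since $a-e\in U(R)$ in the definition of strong $\pi$-rad clean versus $a+p\in U(R)$ here). So the ``only if'' part costs only a sentence or two.

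For the ``if'' direction, fix $a\in R$. Quasipolarity gives an idempotent $p\in comm^2(a)$ with $a+p\in U(R)$ and $ap\in R^{qnil}$. Strong $\pi$-rad cleanness gives an idempotent $q'\in comm(a)$ with $a-q'\in U(R)$ and $a^nq'\in J(R)$ for some $n$; writing $q=q'$ (or its negative to match the $-a-q\in U(R)$ bookkeeping) we have $a^nq\in J(R)$. The key first step is to promote the radical condition on $q$ to a quasinilpotence statement: since $a^nq\in J(R)$ and $q$ is an idempotent commuting with $a$, we get $aq\in J^{\#}(R)\subseteq R^{qnil}$ — indeed $(aq)^n=a^nq^n=a^nq\in J(R)$. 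Now both $p$ and $q$ are idempotents lying in $comm(a)$ with $a+p\in U(R)$, $a-q\in U(R)$ (unit conditions, possibly up to sign), $ap\in R^{qnil}$, $aq\in R^{qnil}$. The plan is then to invoke the uniqueness of the spectral idempotent: by \cite[Proposition 2.3]{KP}, the idempotent associated to a generalized Drazin invertible (equivalently quasipolar) element is unique, so $p=q$. Once $p=q$, we have a single idempotent $p\in comm^2(a)$ with $a+p\in U(R)$ and $a^np\in J(R)$, which is precisely pseudopolarity of $a$; since $a$ was arbitrary, $R$ is pseudopolar.

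The main obstacle — really the only subtle point — is verifying that \cite[Proposition 2.3]{KP} applies to force $p=q$. One must check that the idempotent $q$ furnished by strong $\pi$-rad cleanness genuinely qualifies as ``the'' spectral idempotent in the sense of that proposition: it commutes with $a$ (given), $a+q$ (or $a-q$) is a unit (given), and $aq\in R^{qnil}$ (derived above from $a^nq\in J(R)$). The cited proposition should say that any idempotent $p$ with $pa=ap$, $a+p\in U(R)$ and $ap\in R^{qnil}$ is uniquely determined by $a$; this does not require $p\in comm^2(a)$ a priori, which is exactly why it can be applied to $q$, which only lies in $comm(a)$. A minor auxiliary point is handling the sign discrepancy between the conventions ``$a+p\in U(R)$'' (quasipolar/pseudopolar) and ``$a-e\in U(R)$'' (strongly $\pi$-rad clean); this is cosmetic and absorbed by replacing $e$ with $-e$ where needed, noting $-e$ need not be idempotent but the uniqueness statement is phrased in whichever convention \cite{KP} uses. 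I would also record the immediate corollary that $eRe$ inherits pseudopolarity, via Theorem~\ref{orhan} together with Corollary~\ref{cornerqsr} (quasipolarity passes to corners) and the fact that strong $\pi$-rad cleanness passes to corners.

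\begin{cor}\cite[Corollary 2.12]{WC3} Let $R$ be a ring with $e^2=e\in R$. If $R$ is
pseudopolar, then so is $eRe$.
\end{cor}

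\begin{proof} By Theorem~\ref{orhan}, $R$ is strongly $\pi$-rad clean and quasipolar. Then $eRe$ is quasipolar by Corollary~\ref{cornerqsr}, and $eRe$ is strongly $\pi$-rad clean by a routine corner argument (using $J(eRe)=eJ(R)e$). Applying Theorem~\ref{orhan} again, $eRe$ is pseudopolar.
\end{proof}
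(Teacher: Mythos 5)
Your proof of the corollary is essentially identical to the paper's: both decompose pseudopolarity via Theorem~\ref{orhan} into quasipolarity plus strong $\pi$-rad cleanness, pass each property to the corner ring (the paper cites Diesl's thesis for the strongly $\pi$-rad clean corner step where you call it a routine argument), and reassemble with Theorem~\ref{orhan}. This is correct and matches the paper's approach.
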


\begin{proof} Assume that $R$ is pseudopolar. Then $R$ is strongly $\pi$-rad clean and
quasipolar by Theorem~\ref{orhan}. In view of \cite[Corollary
4.2.2]{Di} and Corollary~\ref{cornerqsr}, $eRe$ is strongly
$\pi$-rad clean and quasipolar. Hence $eRe$ is pseudopolar again
by Theorem~\ref{orhan}.
\end{proof}

\begin{rem}\rm{ Let $S$ be a commutative ring and $R=M_2(S)$.
By \cite[Example 4.3]{WC3}, we have $J^{\#}(R)=R^{qnil}$. Hence,
by Theorem~\ref{pseudo} and Corollary~\ref{cor3}, $R$ is
quasipolar if and only if $R$ is pseudopolar. Further, if $S$ is
commutative local, then $R$ is pseudopolar if and only if $R$ is
quasipolar if and only if $R$ is strongly clean (by
\cite[Corollary 2.13]{GHH}) if and only if $R$ is strongly
$\pi$-rad clean (by \cite[Corollary 4.3.7]{Di}).}
\end{rem}

\end{document}